\documentclass[12pt]{amsart}
\usepackage{latexsym, url}
\usepackage{amsthm}
\usepackage{amsmath}
\usepackage{amsfonts}
\usepackage{amssymb}
\usepackage[dvips]{graphicx}
\usepackage{xypic}
\addtolength\textwidth{1 in}
\addtolength\hoffset{-.5 in}
\thispagestyle{plain}

\input xy
\xyoption{all}

\newtheorem{theo}{Theorem}[section]
\newtheorem{lemma}[theo]{Lemma}
\newtheorem{assume}[theo]{Assumption}
\newtheorem{obs}[theo]{Observation}
\newtheorem{propo}[theo]{Proposition}

\newtheorem{defi}[theo]{Definition}
\newtheorem{coro}[theo]{Corollary}
\newtheorem{rem}[theo]{Remark}

\newtheorem{exam}[theo]{Example}
\newtheorem{exams}[theo]{Examples}

\newcommand\Surj{\operatorname{\it Surj}}
\newcommand\Inj{\operatorname{\it Inj}}

\newcommand\Alg{\operatorname{\bf Alg}}

\newcommand\Pos{\operatorname{\bf Pos}}

\newcommand\op{\operatorname{op}}
\newcommand\id{\operatorname{id}}
\newcommand\Id{\operatorname{Id}}

\newcommand\Set{\operatorname{\bf Set}}
\newcommand\Dist{\operatorname{\bf Dist}}
\newcommand\PMet{\operatorname{\bf PMet}}
\newcommand\Met{\operatorname{\bf Met}}
\newcommand\Gra{\operatorname{\bf Gra}}

\newcommand\colim{\operatorname{colim}}

\newcommand\ca{\mathcal {A}}

\newcommand\cc{\mathcal {C}}
\newcommand\cd{\mathcal {D}}
\newcommand\cf{\mathcal {F}}

\newcommand\ck{\mathcal {K}}
\newcommand\cl{\mathcal {L}}

\newcommand\cn{\mathcal {N}}

\newcommand\cs{\mathcal {S}}
\newcommand\ct{\mathcal {T}}

\newcommand\cx{\mathcal {X}}

\newcommand\cv{\mathcal {V}}

\newcommand\eps{\varepsilon}

 \newbox\noforkbox \newdimen\forklinewidth
\forklinewidth=0.3pt \setbox0\hbox{$\textstyle\smile$}
\setbox1\hbox to \wd0{\hfil\vrule width \forklinewidth depth-2pt
 height 10pt \hfil}
\wd1=0 cm \setbox\noforkbox\hbox{\lower 2pt\box1\lower
2pt\box0\relax}

\date{December 31, 2023}
 
\begin{document}
\title[Discrete equational theories]
{Discrete equational theories}
\author[J. Rosick\'{y}]
{J. Rosick\'{y}}
\thanks{Supported by the Grant Agency of the Czech Republic under the grant 22-02964S} 
 
\address{\newline J. Rosick\'{y}\newline
Department of Mathematics and Statistics,\newline
Masaryk University, Faculty of Sciences,\newline
Kotl\'{a}\v{r}sk\'{a} 2, 611 37 Brno,\newline
 Czech Republic}
\email{rosicky@math.muni.cz}

\begin{abstract}
On a locally $\lambda$-presentable symmetric monoidal closed category $\cv$, $\lambda$-ary enriched equational theories correspond to enriched monads preserving $\lambda$-filtered colimits. We introduce  discrete
$\lambda$-ary enriched equational theories where operations are 
induced by those having discrete arities (equations are not required to have discrete arities) and show that they correspond to enriched monads preserving preserving $\lambda$-filtered colimits and surjections. Using it, we prove enriched Birkhoff type theorems for categories of algebras of discrete theories. This extends known results from metric spaces and posets to general symmetric monoidal closed categories.

\end{abstract}
\maketitle
\section{Introduction}
Motivated by probabilistic programming, Mardare, Panangaden and Plotkin \cite{MPP1,MPP} introduced quantitative equations and developed universal algebra over metric spaces. In the recent paper
\cite{R}, we have related their quantitative equational theories to the theories of Bourke and Garner \cite{BG}. More papers were influenced
by \cite{MPP1,MPP}, in particular \cite{A} and \cite{MU}. In the first one, Ad\' amek showed that $\omega_1$-basic quantitative equational theories correspond to $\aleph_1$-ary enriched monads on metric spaces preserving surjections. In \cite{MU}, Milius and Urbat clarified the Birkhoff-type theorems stated in \cite{MPP}.

In this paper, we work in a general symmetric monoidal closed category $\cv$ which is locally $\lambda$-presentable as a closed category. The leading examples are $\Pos$ (posets) and $\Met$ (metric spaces). Under discrete objects of $\cv$ we mean copowers of the monoidal unit $I$. 
In $\Pos$ these are discrete posets and in $\Met$ discrete spaces (having all distances $0$ or $\infty$). We introduce discrete theories whose operations are induced by those with a discrete arity. We show that discrete theories correspond to $\lambda$-ary enriched monads on $\cv$ preserving surjections (i.e., morphisms for which is $I$ projective). As a special case, we get the result of Ad\' amek \cite{A}. Under mild assumptions, surjections form a left part of a factorization system $(\Surj,\Inj)$ on the underlying category $\cv_0$ of $\cv$. We assume that $\cv_0$ is $\Inj$-locally $\mu$-generated for $\mu\leq\lambda$ in the sense of \cite{DR} and prove Birkhoff's type theorems for categories of algebras of discrete theories. Metric spaces are locally $\aleph_1$-presentable and also $\Inj$-locally $\aleph_0$-generated and our Birkhoff-type theorems yield those of \cite{MPP}.

The Appendix withdraws some claims of \cite[Section 5]{R}.
\vskip 1 mm
{\bf Acknowledgement.} The author is grateful to Ji\v r\'\i{} Ad\' amek for valuable discussions, to Jason Parker for pointing out that
\cite[Proposition 5.1]{R} is false and to the anonymous referees for the careful examination of the paper and many valuable suggestions. In particular, they detected a gap in our original proof of Birkhoff type theorems.

\section{Background on algebraic theories}
Classical universal algebra starts with a \textit{signature} $\Sigma$ giving a set of operations equipped with arities which are finite cardinals. A $\Sigma$-algebra $A$ assigns to every $n$-ary operation $f$ a mapping $f_A:A^n\to A$. Then one defines \textit{terms} and \textit{equations}, interprets terms on an algebra and says when an equation is satisfied by an algebra. An \textit{equational theory} $E$ is a set of equations and an algebra satisfies $E$ if it satisfies all equations from $E$. The forgetful functor $U:\Alg(E)\to\Set$ from the category $\Alg(E)$ of algebras satisfying $E$ has a left adjoint $F$ and $\Alg(E)$ is equivalent to the category $\Alg(T)$ of algebras for the monad $T=UF$.
The algebra $Fn$ consists of equivalence classes of $n$-ary terms and morphisms $Fm\to Fn$ give $m$-tuples of $n$-ary terms. We can consider them as $(n,m)$-ary operations where the \textit{input arity} $n$ is the usual arity and the \textit{output arity} $m$ is the multiplicity. Then the superposition of terms is replaced by the composition of these operations. If $\cn$ is the full subcategory of $\Set$ consisting of finite cardinals and $\ct$ the full subcategory of $\Alg(E)$ consisting of free algebras on finite cardinals then the domain-codomain restriction of $F$ gives an identity-on-objects functor $J:\cn\to\ct$. The dual $\ct^{\op}$ of $\ct$ is a Lawvere theory whose algebras $A$ are functors $\hat{A}:\ct^{\op}\to\Set$ preserving finite products. This is the same as $\hat{A}J^{\op}=\Set(K-, A)$ where $K:\cn\to\Set$ is the inclusion and $A=\hat{A}(1)$. Then $\Alg(E)$ is equivalent to the category of algebras of $\ct^{\op}$. Hence classical universal algebra can be equivalently captured by equational theories, finitary monads (= monads preserving filtered colimits), or Lawvere theories. All this is well known due to Birkhoff, Lawvere and Linton (see \cite{ARV1}).

In ordered universal algebra, given a signature $\Sigma$, a $\Sigma$-algebra is a poset $A$ equipped with monotone mappings $f_A:A^n\to A$ for every $n$-ary operation $f$ from $\Sigma$. But, instead of equations, one takes inequations $p\leq q$ of terms (see \cite{Bl}. 
An algebra satisfies this inequation if $p_A\leq q_A$ in the poset 
of monotone mappings $A^n\to A$.
It is natural to take enriched signatures where arities are finite posets. Then the resulting enriched inequational theories correspond to finitary enriched monads on $\Pos$ (\cite{AFMS}). Recall that the category $\Pos$ of posets and monotone mapping is locally finitely presentable as a cartesian closed category. The free algebra $FX$ on a finite poset $X$ consists of equivalence classes of $X$-ary terms
and morphisms $FY\to FX$ can be taken as $(X,Y)$-ary operations. The inequation $p\leq q$ of $X$-ary terms, which is called an \textit{inequation in the context}  $X$, then means that the pair $p,q$ of terms is an $(X,2)$-ary operation where $2$ is a two-element chain. 
In this way, inequational theories can be replaced by equational theories in $(X,Y)$-ary operations (see \cite[Example 4.11(1)]{R}). If $\cf$ is the full subcategory of $\Pos$ consisting of finite poset and $\ct$ the full subcategory of $\Alg(E)$ consisting of free algebras on finite posets then the domain-codomain restriction of $F$ gives an identity-on-objects enriched functor $J:\cf\to\ct$. The dual $\ct^{\op}$ of $\ct$ is an enriched Lawvere theory (\cite{P1}). Hence ordered universal algebra can be equivalently captured by inequational theories, enriched equational theories, finitary monads, or enriched Lawvere theories.

In metric universal algebra, one has to take into account that the category $\Met$ of metric spaces (distances $\infty$ are allowed) and nonexpanding maps is only locally $\aleph_1$-presentable as a symmetric monoidal closed category. Thus one has to take countable metric spaces as arities. Given such an enriched signature $\Sigma$, a $\Sigma$-algebra is a metric space $A$ equipped with nonexpanding mappings $f_A:A^X\to A$ for every $X$-ary operation $f$ from $\Sigma$. Now, instead of equations, one takes quantitative equations $p=_\eps q$ of terms where $\eps\geq 0$ is a real number (see \cite{MPP,MPP1}). An algebra satisfies this quantitative equation if $d(p_A,q_A)\leq\eps$ in the metric space of nonexpanding mappings $A^X\to A$. These equations are called \textit{basic quantitative equations} (in context $X$). This means that the pair $p,q$ of terms is an $(X,2_\eps)$-ary operation where $2_\eps$ is a two-element metric space with the distance $\eps$ between the two points. In this way, basic quantitative theories can be replaced by equational theories in $(X,Y)$-ary operations (see \cite[Example 4.11(2)]{R}). If $\cc$ is the full subcategory of $\Met$ consisting of countable metric spaces and $\ct$ the full subcategory of $\Alg(E)$ consisting of free algebras on countable metric spaces then the domain-codomain restriction of $F$ gives an identity-on-objects enriched functor $J:\cf\to\ct$. The dual $\ct^{\op}$ of $\ct$ is an enriched $\aleph_1$-ary enriched Lawvere theory and these theories correspond to enriched monads preserving $\aleph_1$-filtered colimits (see \cite{P1}). Hence metric universal algebra can be equivalently captured by basic quantitative theories, enriched equational theories, $\aleph_1$-ary monads, or enriched $\aleph_1$-ary Lawvere theories.

In general, let $\cv$ be a symmetric monoidal closed category with the unit object $I$ and the underlying category $\cv_0$. We assume that $\cv$ is locally $\lambda$-presentable as as a symmetric monoidal closed category which means that the underlying category $\cv_0$ is locally $\lambda$-presentable, the tensor unit $I$ is $\lambda$-presentable and $X\otimes Y$ is $\lambda$-presentable whenever $X$ and $Y$ are $\lambda$-presentable. We will denote by $\cv_\lambda$ the (representative) small, full subcategory consisting of $\lambda$-presentable objects.

Following \cite{BG}, let $\ca$ be a small, full, dense sub-$\cv$-category of $\cv$ with the inclusion $K:\ca\to\cv$. Objects of $\ca$ are called \textit{arities}. Then an $\ca$-\textit{pretheory} is an identity-on-objects $\cv$-functor $J:\ca\to\ct$. A $\ct$-\textit{algebra} is an object $A$ of $\cv$ together with a $\cv$-functor $\hat{A}:\ct^{\op}\to\cv$ whose composition with $J^{\op}$ is $\cv(K-,A)$ (in \cite{BG}, $\ct$-algebras are called concrete 
$\ct$-models). Every $\ca$-pretheory induces a $\cv$-monad $T:\cv\to\cv$ given by its $\cv$-category $\Alg(\ct)$ of algebras. Conversely, a $\cv$-monad $T$ induces an $\ca$-pretheory $J:\ca\to\ct$ where $\ct$ is the full subcategory of $\Alg(T)$ consisting of free algebras on objects from $\ca$ and $J$ is the domain-codomain restriction of the free algebra functor.
An $\ca$-pretheory is an $\ca$-\textit{theory} if it is given by its monad. Then $\hat{A}$ is the hom-functor $\Alg(\ct)(-,A)$ restricted on free algebras over $\ca$. Conversely, a monad $T$ is $\ca$-nervous if it is given by its $\ca$-theory. In this way, we get a one-to-one correspondence between $\ca$-theories and $\ca$-nervous monads (see \cite[Corollary 21]{BG}). 

Under a $\lambda$-\textit{ary $\cv$-theory} we will mean
a $\cv_\lambda$-theory. Following \cite{BG}, $\lambda$-ary 
$\cv$-theories correspond to $\cv$-monads on $\cv$ preserving
$\lambda$-filtered colimits. They are called $\lambda$-\textit{ary
$\cv$-monads}. Hence $\lambda$-ary monads are precisely $\cv_\lambda$-nervous monads.

\section{Surjections}
The underlying functor $\cv_0(I,-):\cv_0\to\Set$ has a left adjoint $-\cdot I$ sending a set $X$ to the coproduct $X\cdot I$ of $X$ copies of $I$ in $\cv_0$. Objects $X\cdot I$ will be called \textit{discrete}. Every object $V$ of $\cv$ determines a discrete object $V_0=\cv_0(I,V)\cdot I$ and morphism $\delta_V:V_0\to V$ given by the counit of the adjunction. Every morphism $f:V\to W$ determines the morphism $f_0=\cv_0(I,f)\cdot I$ between the underlying discrete objects.

A morphism $f:A\to B$ will be called a \textit{surjection} if $\cv_0(I,f)$ is surjective. Let $\Surj$ denote the class of all surjections in $\cv_0$ and let $\Inj$ be the class of morphisms of $\cv_0$ having the unique right lifting property with respect to every surjection.
Morphisms from $\Inj$ will be called \textit{injections}. Recall that $g$
is an injection iff for every surjection $f$ and every commutative square
$$
\xymatrix@C=4pc@R=3pc{
A \ar [r]^{u} \ar [d]_{f}& C \ar [d]^{g}\\
B \ar [r]_{v}& D
}
$$
there is a unique $t:B\to C$ such that $tf=u$ and $gt=v$.

\begin{lemma}\label{surj}
$\Surj$ is accessible and closed under $\lambda$-directed colimits in $\cv_0^\to$.
\end{lemma}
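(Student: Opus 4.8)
The plan is to realize $\Surj$ as the preimage of a completely explicit subcategory of $\Set^\to$ under an accessible functor, and then to invoke standard closure properties of accessible categories. Write $U=\cv_0(I,-)\colon\cv_0\to\Set$. Since $I$ is $\lambda$-presentable, $U$ preserves $\lambda$-directed colimits, and $\cv_0$, $\Set$ are locally $\lambda$-presentable. Applying $U$ to domains and codomains gives a functor $U^\to\colon\cv_0^\to\to\Set^\to$ between locally $\lambda$-presentable categories; as colimits in arrow categories are formed componentwise, $U^\to$ again preserves $\lambda$-directed colimits, hence is accessible. If $\cs\subseteq\Set^\to$ denotes the full subcategory on the surjective functions, then the definition of $\Surj$ says exactly that $\Surj=(U^\to)^{-1}(\cs)$, i.e. $\Surj$ is the full subcategory of $\cv_0^\to$ on the arrows $f$ with $U^\to f\in\cs$.

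I would treat the colimit statement first. In a filtered colimit of sets every element of the colimit is represented at some stage, so a $\lambda$-directed colimit of surjections in $\Set^\to$ is again a surjection; in particular $\cs$ is closed under $\lambda$-directed colimits in $\Set^\to$. Hence if $D$ is a $\lambda$-directed diagram in $\cv_0^\to$ with each $D(d)\in\Surj$ and $f$ is its colimit computed in $\cv_0^\to$, then $U^\to f\cong\colim U^\to D$ lies in $\cs$, so $f\in\Surj$. This gives closure of $\Surj$ under $\lambda$-directed colimits in $\cv_0^\to$.

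For accessibility I would use that $\cs$ is itself accessible: it is finitely accessible, with finitely presentable objects the surjections between finite sets and every surjection the directed colimit of its restrictions to finite subsets of domain and codomain (equivalently, $\cs$ is the category of sets equipped with an equivalence relation); the precise index is immaterial, and the previous paragraph already shows $\cs$ is accessibly embedded in $\Set^\to$. Since $U^\to$ is an accessible functor between accessible categories and $\cs$ is an accessible, accessibly embedded, replete full subcategory of $\Set^\to$, the preimage $(U^\to)^{-1}(\cs)$ is accessible: it is equivalent to the pseudopullback of $U^\to$ along $\cs\hookrightarrow\Set^\to$, which is accessible by the pseudopullback theorem for accessible categories \cite{AR}. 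Thus $\Surj$ is accessible.

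The argument is essentially formal; the only step needing a little care is verifying that $\cs$ is accessible and accessibly embedded in $\Set^\to$, so that the pseudopullback theorem applies, and quoting a form of that theorem that does not secretly presuppose reflectivity of $\cs$ or that $U^\to$ is a right adjoint. If one wishes to avoid this machinery, one may instead observe that $f\in\Surj$ precisely when $f$ has the right lifting property against the unique morphism $0\to I$ from the initial object $0$ of $\cv_0$, whose endpoints are both $\lambda$-presentable, and appeal to the known accessibility of the right class of a weak factorization system cofibrantly generated by a set of maps between presentable objects; closure under $\lambda$-directed colimits in $\cv_0^\to$ then likewise follows from the $\lambda$-presentability of that generator by the usual lifting argument.
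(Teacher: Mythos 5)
Your argument is correct, but its main line differs from the paper's. The paper's proof is a two-line reduction: a morphism $f$ is a surjection iff it has the right lifting property with respect to $0\to I$, whose domain and codomain are $\lambda$-presentable, and then one quotes the standard fact that a class defined by the right lifting property against a set of morphisms between presentable objects is accessible and accessibly embedded in the arrow category (\cite[Proposition 3.3]{R2}, resting on \cite[Proposition 4.7]{AR3}) --- which is exactly the alternative you sketch in your closing paragraph. Your primary route instead exhibits $\Surj$ as the preimage of the full subcategory $\cs\subseteq\Set^\to$ of surjections under the $\lambda$-directed-colimit-preserving functor $\cv_0(I,-)^\to$, checks directly that $\cs$ is accessible and closed under directed colimits, and invokes the pseudopullback theorem; the closure of $\Surj$ under $\lambda$-directed colimits then falls out of $\lambda$-presentability of $I$ and the componentwise computation of colimits in arrow categories. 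Both proofs are sound. What your route buys is a concrete, hands-on verification (and a concrete description of $\cs$ as sets with an equivalence relation), at the cost of needing the pseudopullback theorem and the repleteness/fullness checks to identify the strict preimage with the pseudopullback; the paper's route is shorter, needs no analysis of $\cs$ at all, and is the formulation that is reused later (e.g.\ in Remark~\ref{surj1}, where stability under products and pullbacks is also read off from the lifting-property description). A minor point: the pseudopullback theorem you want is in the Makkai--Par\'e/Ad\'amek--Rosick\'y literature, i.e.\ the book cited here as \cite{AR3}, not \cite{AR}.
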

\begin{proof}
$f$ is a surjection iff it has the right lifting property with respect to $0\to I$.
Like in \cite[Proposition 3.3]{R2}, the result follows from \cite[Proposition 4.7]{AR3}.
\end{proof}

\begin{rem}\label{surj1}
{
\em
(1) Surjections are right-cancellable, i.e., if $gf$ is a surjection 
then $g$ is a surjection.

(2) Surjections are closed under products and stable under pullbacks (because they are given by the right lifting property with respect to $0\to I$).
}
\end{rem}

\begin{lemma}\label{fs}
$(\Surj,\Inj)$ is a factorization system in $\cv_0$ if and only if $\Surj$ is closed under colimits in $\cv_0^\to$.  
\end{lemma}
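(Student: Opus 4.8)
The plan is to use that $\Inj$ is by definition $\Surj^{\perp}$, the class of all morphisms right orthogonal (with \emph{unique} diagonals) to every surjection, and to recognize $(\Surj,\Inj)$ as an orthogonal factorization system. The forward implication is then formal: if $(\Surj,\Inj)$ is a factorization system then $\Surj={}^{\perp}\Inj$, and any left-orthogonality class ${}^{\perp}\cm$ is closed under every colimit that exists in $\cv_0^{\to}$. Indeed, colimits in $\cv_0^{\to}$ are pointwise, so a commutative square from a colimit $f=\colim_i f_i$ to some $g\in\cm$ restricts along the colimit cocone to squares from the $f_i$; their unique diagonals are compatible over the cocone, hence assemble into a single diagonal for $f$, which is unique by the same cocone argument. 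Thus $\Surj={}^{\perp}\Inj$ is closed under colimits in $\cv_0^{\to}$.

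For the converse, assume $\Surj$ is closed under colimits in $\cv_0^{\to}$. By Lemma~\ref{surj} the full subcategory $\cs$ of $\cv_0^{\to}$ on the surjections is accessible; being moreover closed under colimits in the locally presentable category $\cv_0^{\to}$, the inclusion $\cs\hookrightarrow\cv_0^{\to}$ preserves colimits and $\cs$ is the closure under colimits in $\cv_0^{\to}$ of the \emph{set} $\cg$ of representatives of its presentable objects: every surjection is a directed colimit of members of $\cg$, while conversely the colimit-closure of $\cg$ lies in $\Surj$ by hypothesis. Since $X\perp(-)$ sends colimits in $\cv_0^{\to}$ to limits, a morphism is right orthogonal to all of $\Surj$ iff it is right orthogonal to $\cg$, so $\Inj=\Surj^{\perp}=\cg^{\perp}$. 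As $\cg$ is a set and $\cv_0$ is locally presentable (so that the domains and codomains appearing in $\cg$ are presentable), the transfinite construction of the orthogonal factorization system generated by $\cg$ applies and produces a factorization system $\bigl({}^{\perp}(\cg^{\perp}),\,\cg^{\perp}\bigr)=\bigl({}^{\perp}\Inj,\,\Inj\bigr)$ on $\cv_0$.

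It remains to show that its left class ${}^{\perp}\Inj$ coincides with $\Surj$. The inclusion $\Surj\subseteq{}^{\perp}\Inj$ is immediate from $\Inj=\Surj^{\perp}$. For the reverse one identifies ${}^{\perp}\Inj$ with the closure of $\cg$ under colimits in $\cv_0^{\to}$: if $f$ is left orthogonal to $\cg^{\perp}$ and $f=m e$ is the factorization just constructed, then $e$ is built from $\cg$ by pushouts, transfinite composites and coequalizers (all of them colimits in $\cv_0^{\to}$), while $f\perp m$ exhibits $f$ as a retract of $e$ in $\cv_0^{\to}$, and a retract is again a colimit in $\cv_0^{\to}$. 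Since $\Surj$ contains $\cg$ and is closed under colimits in $\cv_0^{\to}$ by hypothesis, this yields ${}^{\perp}\Inj\subseteq\Surj$, hence equality, and $(\Surj,\Inj)=({}^{\perp}\Inj,\Inj)$ is a factorization system. The main obstacle is this converse direction, and within it the identification ${}^{\perp}\Inj=\Surj$: one needs both that an accessible colimit-closed subcategory of $\cv_0^{\to}$ is the colimit-closure of a set (so that the transfinite construction is available at all) and that the left class of the generated orthogonal factorization system is precisely that colimit-closure; granting these two structural facts the rest is bookkeeping.
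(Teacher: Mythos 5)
Your overall plan — forward direction formal, converse via accessibility of $\Surj$, the orthogonal factorization system generated by a set, and a retract argument — is a reasonable self-contained substitute for the paper's one-line appeal to \cite[Theorem 2.2]{FR}, and the forward direction, the identification $\Inj=\cg^{\perp}$, and the retract step are fine. The pivotal claim, however, is wrong as stated: you assert that the left factor $e$ of the constructed factorization is ``built from $\cg$ by pushouts, transfinite composites and coequalizers (all of them colimits in $\cv_0^{\to}$)'', and, more generally, that ${}^{\perp}(\cg^{\perp})$ is the closure of $\cg$ under colimits in $\cv_0^{\to}$. Neither holds in general. A pushout in $\cv_0$ of a member of $\cg$ along an arbitrary map is not a colimit in $\cv_0^{\to}$ of a diagram valued in $\cg$; a (transfinite) composite is the colimit in $\cv_0^{\to}$ of the \emph{partial composites}, not of the individual steps; and the coequalizer stages of the orthogonal construction produce regular epimorphisms of $\cv_0$, which are not per se colimits in $\cv_0^{\to}$ of surjections (regular epis need not be surjections at all, cf.\ the graph example in Section 2). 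Concretely, for $\cg=\{\emptyset\to 1\}$ in $\Set$ the closure of $\cg$ under colimits in $\Set^{\to}$ is $\{\emptyset\to S\}$, while $\cg^{\perp}$ is the class of bijections and ${}^{\perp}(\cg^{\perp})$ is the class of \emph{all} maps; so the second ``structural fact'' you grant at the end is false, and an argument using only ``$\Surj$ contains $\cg$ and is closed under colimits in $\cv_0^{\to}$'' cannot suffice.

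The gap is repairable, but only by invoking properties of $\Surj$ you never use: $\Surj$ contains all isomorphisms and split epimorphisms and is closed under composition (immediate from the definition of surjection). Then the pushout of a surjection $g:X\to Y$ along $u:X\to Z$ is the colimit in $\cv_0^{\to}$ of the span $\id_Z\leftarrow \id_X\to g$, hence a surjection under your hypothesis; a coequalizer quotient in $\cv_0$ is a pushout of a codiagonal, which is a split epimorphism and hence a surjection; and transfinite composites are colimits in $\cv_0^{\to}$ of the partial composites, which lie in $\Surj$ by closure under composition. With these reductions every stage of the transfinite construction stays in $\Surj$, so $e\in\Surj$, and your retract argument then yields ${}^{\perp}\Inj\subseteq\Surj$ and the factorization system. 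This is essentially the content of \cite[Theorem 2.2]{FR}, which the paper simply cites after observing, as you do, that $\Surj$ is accessible and therefore the colimit-closure in $\cv_0^{\to}$ of a small (dense) subcategory; your route buys self-containedness at the price of exactly the bookkeeping sketched above, which is where the missing work lies.
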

\begin{proof}
Necessity is evident. Conversely, following \ref{surj}, $\Surj$
has a small dense subcategory $\cs$. If $\Surj$ is closed under colimits in $\cv_0^\to$ then $\Surj$ is the closure under $\cs$ under colimits in $\cv_0^\to$. Following \cite[Theorem 2.2]{FR}, $(\Surj,\Inj)$ is a factorization system.
\end{proof}

\begin{lemma}\label{fs1}
Injections are monomorphisms.
\end{lemma}
\begin{proof}
Take an injection $g:C\to D$ and $u,v:B\to C$ such that $gu=gv$.
Consider a commutative square
$$
\xymatrix@C=4pc@R=3pc{
B+B \ar [r]^{(u,v)} \ar [d]_{\nabla}& C \ar [d]^{g}\\
B \ar [r]_{gu}& D
}
$$
where $\nabla$ is the codiagonal. Since $\nabla$ is a split epimorphism, it is a surjection and thus there is a unique diagonal $B\to C$. Hence $u=v$. We have proved that $g$ is a monomorphism.
\end{proof}

\begin{lemma}\label{fs3}
Assume that $(\Surj,\Inj)$ is a factorization system. Then 
\begin{enumerate}
\item $\Surj$ contains all strong epimorphisms, and
\item if $I$ is a generator in $\cv_0$ then $\Inj$ contains all strong monomorphisms.
\end{enumerate}
\end{lemma}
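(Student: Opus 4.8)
The plan is to combine the orthogonal factorization system $(\Surj,\Inj)$ with the lifting properties defining strong epimorphisms and strong monomorphisms; recall that a strong epimorphism is an epimorphism having the left lifting property with respect to all monomorphisms, and dually a strong monomorphism is a monomorphism having the right lifting property with respect to all epimorphisms. First I would record a small strengthening of Lemma~\ref{fs1}, namely that \emph{every injection is a monomorphism.} Its proof is that of Lemma~\ref{fs1} with the unit $I$ replaced by an arbitrary object $Z$: the codiagonal $\nabla\colon Z+Z\to Z$ is a split epimorphism, hence a surjection, so for $g\in\Inj$ and $x,y\colon Z\to C$ with $gx=gy$ the unique diagonal in the square with top $(x,y)\colon Z+Z\to C$, left $\nabla$, right $g$ and bottom $gx$ forces $x=y$. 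For part (2) I would also note that when $I$ is a generator, i.e.\ $\cv_0(I,-)$ is faithful, every surjection is an epimorphism: if $fq=gq$ with $q\in\Surj$, then $\cv_0(I,f)\,\cv_0(I,q)=\cv_0(I,g)\,\cv_0(I,q)$, and cancelling the surjection $\cv_0(I,q)$ in $\Set$ and applying faithfulness gives $f=g$.

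For (1), let $e\colon A\to B$ be a strong epimorphism and factor it as $A\xrightarrow{\,s\,}E\xrightarrow{\,m\,}B$ with $s\in\Surj$ and $m\in\Inj$; by the previous paragraph $m$ is a monomorphism. The square with top $s$, left $e$, right $m$ and bottom $\id_B$ commutes because $ms=e=\id_B\cdot e$, so the strong epimorphism property produces a diagonal $d\colon B\to E$ with $de=s$ and $md=\id_B$. Then $m$ is a split epimorphism and a monomorphism, hence an isomorphism. Since $\Surj$ contains all isomorphisms and is closed under composition ($\cv_0(I,-)$ is a functor and composites of surjective maps of sets are surjective), $e=ms\in\Surj$.

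For (2), assume in addition that $I$ is a generator, let $m\colon C\to D$ be a strong monomorphism, and factor it as $C\xrightarrow{\,q\,}E\xrightarrow{\,i\,}D$ with $q\in\Surj$ and $i\in\Inj$; by the first paragraph $q$ is an epimorphism. The square with top $\id_C$, left $q$, right $m$ and bottom $i$ commutes because $m\cdot\id_C=m=iq$, so the strong monomorphism property produces a diagonal $d\colon E\to C$ with $dq=\id_C$ and $md=i$. Then $q$ is a split monomorphism and an epimorphism, hence an isomorphism, so $m=iq\in\Inj$ since $\Inj$ contains all isomorphisms and is closed under composition.

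The content sits entirely in the two preliminary observations; after them, both parts are the usual comparison between an orthogonal factorization system and strong epimorphisms/monomorphisms, and I do not expect any real obstacle. The one point to be careful about is that (1) does not assume $I$ is a generator, so there one must genuinely use that injections are monomorphisms — Lemma~\ref{fs1}, which only says $\cv_0(I,g)$ is a monomorphism, is not enough by itself — which is exactly why the mild upgrade of that lemma is the first step.
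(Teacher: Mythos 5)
Your proof is correct and takes essentially the same route as the paper: the paper compares $(\Surj,\Inj)$ with the standard (strong epi, mono) and (epi, strong mono) factorization systems, which reduces (1) to ``injections are monomorphisms'' and (2) to ``surjections are epimorphisms'' --- exactly your two preliminary observations, your diagonal-fill-in arguments being just the usual comparison of factorization systems unwound by hand. Your strengthening of Lemma~\ref{fs1} (replacing $I$ by an arbitrary object $Z$, using that the codiagonal is a split epimorphism and hence a surjection) is precisely what the paper's terse appeal to Lemma~\ref{fs1} in part (1) implicitly needs, since that lemma as stated only gives that $\cv_0(I,g)$ is a monomorphism, which without a generator would not suffice.
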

\begin{proof}
In every locally presentable category, (strong epi, mono) and (epi, strong mono) are factorization systems (see \cite[Proposition 1.61]{AR3}). Thus (1) follows from \ref{fs1}. For (2), it suffices to show that every surjection is an epimorphism, which follows from $I$ being a generator.
\end{proof}

Recall that $I$ is connected iff $\cv_0(I,-)$ preserves coproducts.

\begin{lemma}\label{fs4}
If $I$ is connected and every regular epimorphism is a surjection then
$(\Surj,\Inj)$ is a factorization system.
\end{lemma}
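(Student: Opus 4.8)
The plan is to use Lemma~\ref{fs} and reduce to showing that $\Surj$ is closed under arbitrary colimits in $\cv_0^\to$. Since $\cv_0$ is locally $\lambda$-presentable, every colimit decomposes into coproducts and coequalizers, and by Lemma~\ref{surj} we already know $\Surj$ is closed under $\lambda$-directed colimits; so it suffices to check closure under arbitrary small coproducts and under coequalizers (equivalently, pushouts) in the arrow category. For coproducts, the hypothesis that $I$ is connected is exactly what is needed: given a family $f_i:A_i\to B_i$ of surjections, the functor $\cv_0(I,-)$ preserves the coproduct $\coprod A_i\to\coprod B_i$, turning it into $\coprod \cv_0(I,f_i)$ in $\Set$, which is surjective since each $\cv_0(I,f_i)$ is. Hence $\Surj$ is closed under coproducts in $\cv_0^\to$.

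The remaining point is closure under pushouts in $\cv_0^\to$, i.e., that a pushout of a surjection along a parallel-composable pair is again a surjection; combined with coproduct-closure and $\lambda$-directed-colimit-closure this yields closure under all colimits in $\cv_0^\to$. Here I would invoke the second hypothesis: every regular epimorphism is a surjection. A pushout in the arrow category of a morphism $f:A\to B$ produces, on underlying objects, ordinary pushouts, and the induced map on the pushout is built from $f$ by pushout. More usefully, I would argue that the class $\Surj$ contains the regular epimorphisms and is closed under composition and under the relevant pushouts because regular epimorphisms in a locally presentable category are closed under pushout and because $\Surj$ is right-cancellable (Remark~\ref{surj1}(1)). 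Concretely: factor an arbitrary colimit cocone map through the comparison with a colimit computed levelwise; the levelwise colimit of surjections has surjective underlying-set map once coproducts are handled, and the comparison morphism from the levelwise colimit to the genuine arrow-category colimit is a regular epimorphism (it is a coequalizer of maps between coproducts), hence a surjection, so the composite is a surjection by right-cancellation going the correct way — more precisely one shows the target map is a surjection by writing it as a composite/quotient of surjections.

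I expect the genuine obstacle to be the bookkeeping in the pushout/coequalizer step: one must be careful that the comparison maps witnessing a general colimit as a quotient of coproducts are genuinely regular epimorphisms in $\cv_0$ (this is standard in locally presentable categories, e.g.\ \cite[Chapter~0]{AR3}), and that right-cancellability of $\Surj$ is applied in the direction that is actually valid. Once closure under coproducts (from connectedness of $I$) and closure under coequalizers (from regular epimorphisms being surjections, together with Remark~\ref{surj1}) are both in hand, Lemma~\ref{surj} upgrades this to closure under all colimits in $\cv_0^\to$, and Lemma~\ref{fs} then delivers that $(\Surj,\Inj)$ is a factorization system. I would also double-check the edge case of the empty colimit, where the claim is that identities are surjections, which is immediate.
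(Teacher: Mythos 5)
Your reduction via Lemma~\ref{fs} and your treatment of coproducts are exactly right: connectedness of $I$ makes $\cv_0(I,-)$ preserve coproducts, so a coproduct of surjections is a surjection, and this is how the paper uses that hypothesis as well. But the second half of your argument --- the coequalizer (or pushout) step, which is precisely where the hypothesis ``regular epimorphisms are surjections'' must enter --- is never actually carried out, and the sketch you offer for it is confused. Colimits in $\cv_0^\to$ are computed levelwise (it is a functor category), so there is no ``comparison morphism from the levelwise colimit to the genuine arrow-category colimit'' to which you could apply anything; and the closing phrase ``one shows the target map is a surjection by writing it as a composite/quotient of surjections'' is the statement of what needs proving, not a proof. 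You yourself flag this step as ``the genuine obstacle,'' which is an accurate self-assessment: as written, the key use of the second hypothesis is missing.

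The missing step is short, and it is the paper's argument. Given a coequalizer $f_0\rightrightarrows f_1\xrightarrow{(u,v)} f$ in $\cv_0^\to$ with $f_0,f_1\in\Surj$, the codomain components $B_0\rightrightarrows B_1\xrightarrow{v} B$ form a coequalizer in $\cv_0$, so $v$ is a regular epimorphism and hence, by hypothesis, a surjection. Then $fu=vf_1$ is a composite of surjections, hence a surjection, and right-cancellability of surjections (Remark~\ref{surj1}(1)) gives $f\in\Surj$. (Note the cancellation is applied to $fu$, not to anything built from a nonexistent comparison map, and only the surjectivity of $f_1$ and $v$ is used.) With coproduct-closure and coequalizer-closure in hand, closure under all colimits in $\cv_0^\to$ follows since a general colimit is a coequalizer of maps between coproducts, and Lemma~\ref{fs} finishes the proof; the appeal to $\lambda$-directed colimits from Lemma~\ref{surj} is then not needed. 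Your side remarks about pushouts of regular epimorphisms, while true, do not substitute for this computation.
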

\begin{proof}
If $I$ is connected then $\Surj$ is closed under coproducts 
in $\cv_0^\to$. If regular epimorphisms are surjections then surjections are closed under coequalizers in $\cv_0^\to$. Indeed, let $f_0$ and $f_1$
be surjections and 
$$ 
	\xymatrix@=4pc{
		&  
	f_0	\ar@<0.5ex>[r]^{(u_0,v_0)}
		\ar@<-0.5ex>[r]_{(u_1,v_1)}& f_1  \ar[r]^{(u,v)} & f
	}
	$$ 
be a coequalizer in $\cv_0^\to$. Then 
$$ 
	\xymatrix@=4pc{
		&  
	B_0	\ar@<0.5ex>[r]^{v_0}
		\ar@<-0.5ex>[r]_{v_1}& B_1  \ar[r]^{v} & B
	}
	$$	
is a coequalizer in $\cv_0$, hence $v$ is a surjection. Following 
\ref{surj1}(1), $f$ is a surjection.
\end{proof}

The factorization system $(\Surj,\Inj)$ is
$\lambda$-\textit{convenient} in the sense of \cite{DR} if
\begin{enumerate}
\item $\cv$ is $\Surj$-cowellpowered, i.e., every object of $\cv$ has only a set of surjective quotients, and
\item $\Inj$ is closed under $\lambda$-directed colimits, i.e., every $\lambda$-directed colimit of injections
has the property that a colimit cocone 
\begin{enumerate}
\item  consists of injections, and
\item for every cocone of injections, the factorizing morphism is an injection.
\end{enumerate}
\end{enumerate}

If $I$ is a generator then surjections are epimorphisms and (1) follows from the fact that every locally presentable category is co-wellpowered (see \cite[Theorem 1.58]{AR3}).

\begin{exams}
{
\em
(1) Let $\Pos$ be the category of posets and monotone mappings. $\Pos$
is cartesian closed and $I$ is the one-element poset $1$. Surjections are surjective monotone mappings, i.e, epimorphisms. Injections are embeddings and $(\Surj,\Inj)$ is an $\omega$-convenient factorization system.

(2) Let $\Met$ be the category of (generalized) metric spaces (i.e., with distances $\infty$ allowed) and nonexpanding maps. $\Met$ is a symmetric monoidal closed category where $I$ is the one-element metric space $1$ and $A\otimes B$ has the underlying set $A\times B$ and the metric 
$$
d((a,b),(a',b'))=d(a,a')+d(b,b').
$$ 
Surjections are surjective nonexpanding mappings, injections are isometries  and 
$$
(\Surj,\Inj)
$$ 
is a factorization system (cf. \cite[Example 3.16(1)]{AR}). This factorization system is $\omega$-convenient (see \cite[Remark 2.5(2)]{AR}).
 
(3) Let $\Gra$ be the cartesian closed category of graphs (i.e., sets with a symmetric binary relation) and graph homomorphisms. Then $I$ is the one-element graph with a loop. Let $V$ be a point (the one-element graph without a loop) and $E$ an edge (the two-element graph without loops and with one edge). Then $I$ is the coequalizer of two morphisms $V\to E$. Since $f:E\to I$ is not surjective, \ref{fs3}(1) implies that $(\Surj,\Inj)$ is not a factorization system on $\cv$.
}
\end{exams}

\section{Discrete theories}

\begin{assume}\label{ass1} 
Throughout the rest of the paper, we assume that $\cv$ is locally $\lambda$-presentable as a symmetric monoidal closed category and 
that the functor $\cv_0(I,-):\cv_0\to\Set$ preserves $\lambda$-presentable objects.
\end{assume}

Then the object $X_0$ of $\cv$ is $\lambda$-presentable whenever $X$ is $\lambda$-presentable. 
  
\begin{defi}\label{discrete}
{
\em
We say that a $\lambda$-ary $\cv$-theory $\ct$ is 
\textit{discrete} if every morphism $JI\to JX$ of $\ct$ is a composition $J(\delta_X)f$ for some morphism $f:JI\to JX_0$.
}
\end{defi}

This means that operations of arity $X$ are induced by those of arity $X_0$.

\begin{exams}\label{compare}
{
\em
(1) $\Pos$ is locally finitely presentable as a cartesian closed category. Finitely presentable posets are finite posets, hence
$\Pos(I,-):\Pos\to\Set$ preserves finitely presentable objects. Let $\ct$
be a discrete finitary theory. Let $\Sigma$ be the finitary signature whose $n$-ary operation symbols are $\ct$-morphisms
$J1\to Jn$ where $n$ denotes the discrete poset with $n$ elements. 
Morphisms $J1\to JX$ are $X$-ary terms and, since $\ct$ is discrete,
they are the restrictions of usual $X_0$-ary operation symbols on $X$
(in the sense of \ref{discrete}).
Pairs $f\leq g$, where $f,g:J1\to JX$ yield inequations in context $X$
in the sense of \cite{AFMS}. Hence $\ct$ yields a set of inequations in context. The meaning of $f\leq g$ 
on a $\ct$-algebra $A$ is that $f_A(a_1,\dots,a_n)\leq g_A(a_1,\dots,a_n)$ provided that $a:n\to A$ factorizes through $X\to A$; here $n=X_0$. Hence $\ct$-algebras coincide with algebras in the sense of \cite{AFMS}. An example of a discrete finitary theory is the theory  of ordered monoids which are commutative for comparable elements, i.e., $x\cdot y=y\cdot x$ for $x\leq y$.

Conversely, given a finitary signature $\Sigma$ (with discrete arities), 
the free $\Sigma$-algebra $FX$ on a poset $X$ consists of $X_0$-ary terms where $t\leq t'$ if $t(x_1,\dots,x_n)$ and $t'(x'_1,\dots,x'_n)$ have the same shape (i.e., we can get $t'$ from $t$ by changing variables $x_1,\dots,x_n$ to $x'_1,\dots,x'_n$) and $x_i\leq x'_i$ in $X$ for $i=1,\dots,n$. Let $E$ be a set of inequations in context. Following
\cite[3.22]{AFMS}, free $E$-algebra $JX$ on a poset $X$ is a quotient
$q_X:FX\to JX$ of $FX$. If $X$ is finite, every morphisms $t:J1\to JX$
factorizes through $q_X$ as $t=q_X\bar{t}$. Hence $\bar{t}$ is an $X_0$-ary term and $q_{X_0}(\bar{t})$ yields a factorization of $t$ through
$J(\delta_X)$. Thus $J$ is a discrete $\cv$-theory.

 (2) $\Met$ is locally $\aleph_1$-presentable as a symmetric monoidal closed category. Since $\aleph_1$-presentable objects are the metric spaces having cardinality $<\aleph_1$, $\Met(I,-)$ preserves $\aleph_1$-presentable objects. Let $\ct$ be a discrete $\aleph_1$-ary theory. Like in (1), let $\Sigma$ be the signature whose 
$n$-ary operation symbols are $\ct$-morphisms $J1\to Jn$ where $n$ denotes the discrete metric space with $n$ elements where $n\leq\omega$. Morphisms $J1\to JX$ are $X$-ary terms and, since $\ct$ is discrete, they are the restrictions of usual $X_0$-ary operation symbols on $X$.
Pairs $f,g:J1\to JX$ where $f,g:J1\to JX$ and $d(f,g)\leq\eps$ yield
$\omega_1$-\textit{basic quantitative equations} (\cite{MPP}).
Hence $\ct$ yields an $\omega_1$-\textit{basic quantitative equational theory} of \cite{MPP} (= a set of $\omega_1$-basic quantitative equations). The meaning $d(f,g)\leq\eps$ on a $\ct$-algebra $A$ is that $d(f_A(a_1,\dots,a_n),g_A(a_1,\dots,a_n))\leq\eps$ provided that $a:n\to A$ factorizes through $X\to A$; here $n=X_0$. Hence $\ct$-algebras coincide with algebras in the sense of \cite{MPP}. 
 
Conversely, every $\omega_1$-basic quantitative equational theory $E$ (in a signature $\Sigma$ with discrete arities) yields a discrete $\aleph_1$-ary enriched theory $\ct$. Analogously to (1), the free
$\Sigma$-algebra on a countable metric space $X$ consists $X_0$-ary terms 
and $d(t,t')\leq\eps$  if $t(x_1,\dots,x_n)$ and $t'(x'_1,\dots,x'_n)$ have the same shape and $d(x_i,x'_i)\leq\eps$ in $X$ for $i=1,\dots,n$ (see \cite{MPP1}). This means that $d(t,t')=\infty$ if $t$ and $t'$ do not have the same shape. Following \cite[6.1]{MPP1}, free $E$-algebra $JX$ on a poset $X$ is a quotient $q_X:FX\to JX$ of $FX$. The rest is the same as in (1).
}
\end{exams}

\begin{lemma} 
Let $\ct$ be a $\lambda$-ary $\cv$-theory. The following conditions are equivalent:
\begin{enumerate}
\item $\ct$ is discrete,
\item every morphism $JY\to JX$ of $\ct$ with $Y$ discrete is a composition $J(\delta_X)f$ with $f:JY\to JX_0$ and
\item for every morphism $g:JY\to JX$ of $\ct$, there is a morphism
$f:JY_0\to JX_0$ such that $gJ(\delta_Y)=J(\delta_X)f$.
\end{enumerate}
\end{lemma}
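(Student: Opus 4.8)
The plan is to prove the equivalence by the cycle $(1)\Rightarrow(3)\Rightarrow(2)\Rightarrow(1)$, with essentially all the content in $(1)\Rightarrow(3)$ and, inside it, in one structural observation about theories: \emph{if $\ct$ is a $\lambda$-ary $\cv$-theory and $Z=\coprod_{i\in S}I$ is a discrete arity with coproduct injections $\iota_i\colon I\to Z$, then $(J\iota_i\colon JI\to JZ)_{i\in S}$ is a coproduct cocone in $\ct$.} (The $\iota_i$ lie in $\ca$ since $\ca$ is full and $I,Z\in\ca$.) I would deduce this from the fact that $\ct$ is a genuine theory, not merely a pretheory: up to the chosen identification of objects, $\ct$ is the full sub-$\cv$-category of $\Alg(\ct)$ spanned by the free algebras on the arities, and $J$ is the corestriction of the free-algebra functor $F\colon\cv\to\Alg(\ct)$. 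Being a left adjoint, $F$ preserves the coproduct $Z=\coprod_i I$, so $FZ=\coprod_i FI$ in $\Alg(\ct)$; since $FZ$ is again free on the arity $Z$, this coproduct lies inside the full subcategory, that is, $JZ=\coprod_i JI$ in $\ct$, with injections $J\iota_i$. (Equivalently one argues through the induced monad $T$: a morphism $JZ\to JW$ in $\ct$ is the same as a morphism $Z\to TW$ in $\cv_0$, and $\cv_0(\coprod_i I,TW)\cong\prod_i\cv_0(I,TW)\cong\prod_i\ct(JI,JW)$.)

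$(1)\Rightarrow(3)$. Given $g\colon JY\to JX$ in $\ct$, set $Z:=Y_0=\cv_0(I,Y)\cdot I=\coprod_{s\in\cv_0(I,Y)}I$ with injections $\iota_s\colon I\to Y_0$; by definition of the counit, $\delta_Y\iota_s=s\colon I\to Y$ for each $s$. Then $gJ(\delta_Y)J\iota_s=gJ(s)$ is a morphism $JI\to JX$, so by discreteness there is $h_s\colon JI\to JX_0$ with $gJ(s)=J(\delta_X)h_s$. By the observation, $JY_0=\coprod_s JI$ in $\ct$, so the family $(h_s)_s$ induces a unique $f\colon JY_0\to JX_0$ with $fJ\iota_s=h_s$ for all $s$. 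The morphisms $J(\delta_X)f$ and $gJ(\delta_Y)$ from $JY_0$ to $JX$ agree after precomposition with every injection $J\iota_s$ (both equal $gJ(s)$), hence $gJ(\delta_Y)=J(\delta_X)f$, which is statement (3).

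$(3)\Rightarrow(2)$. For $g\colon JY\to JX$ with $Y$ discrete, the counit $\delta_Y\colon Y_0\to Y$ is a split epimorphism (triangle identity, since $Y$ has the form $S\cdot I$); choose a section $\sigma\colon Y\to Y_0$, which lies in $\ca$ by fullness. Applying (3) to $g$ gives $f_0\colon JY_0\to JX_0$ with $gJ(\delta_Y)=J(\delta_X)f_0$, and then $f:=f_0J\sigma\colon JY\to JX_0$ satisfies $J(\delta_X)f=gJ(\delta_Y)J\sigma=g$. Finally $(2)\Rightarrow(1)$ is immediate, taking $Y:=I$, which is discrete. Thus the only nontrivial step is the structural observation that $J$ preserves copowers of $I$; once it is available the three implications are routine manipulations with $\delta$ and coproduct universal properties. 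The single technical point used throughout is that the discrete objects $X_0$ and $Y_0$ — hence the injections $\iota_s$ — lie in $\ca$, which is part of the standing assumptions under which Definition \ref{discrete} is stated.
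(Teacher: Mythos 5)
Your proof is correct and rests on the same key point as the paper's: since $\ct$ is a genuine theory, $J$ is (a corestriction of) the free-algebra functor and therefore preserves the coproducts of copies of $I$ presenting discrete arities, after which the implications are routine manipulations with $\delta$. You merely route the cycle differently --- proving (1)$\Rightarrow$(3) directly and (3)$\Rightarrow$(2) via a section of $\delta_Y$ (which in fact spells out the detail behind the paper's ``clearly (3)$\Rightarrow$(2)'') --- whereas the paper proves (1)$\Rightarrow$(2) by the same gluing argument and obtains (3) by precomposing with $J(\delta_Y)$.
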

\begin{proof}
Clearly (3)$\to$(2)$\to$(1).

(1)$\to$(2). Every discrete object is a coproduct of copies of $I$. Since
$\ct$ is an $\cv_\lambda$-theory, $J$ is given by free $\ct$-algebras and
thus it preserves coproducts. Hence (1) implies (2).

(2)$\to$(3). It suffices to apply (2) on the composition $gJ(\delta_Y)$.
\end{proof}

\begin{theo}\label{discrete1} 
A $\lambda$-ary $\cv$-theory $\ct$ is discrete if and only if its induced monad preserves surjections.
\end{theo}
\begin{proof}
I. Let $\ct$ be discrete, $U:\Alg(\ct)\to\cv$ be the forgetful $\cv$-functor and $F$ its left $\cv$-adjoint. Then the induced $\cv$-monad is $T=UF$. Consider a $\lambda$-presentable object $X$, $\delta_X:X_0\to X$ and $a:I\to UFX$. Since $\ct$ is discrete, the adjoint transpose $\tilde{a}:FI\to FX$ factorizes through $F\delta_X$, i.e.,
$\tilde{a}=F(\delta_X)b$ where $b:FI\to FX_0$. Hence  
 $a=UF(\delta_X)\tilde{b}$. We have proved that $UF(\delta_X)$ is surjective.

Consider an arbitrary $X$ in $\cv$ and express it as a $\lambda$-directed colimit of $\lambda$-presentable objects $X_m$ ($m\in M$). Since $I$ is $\lambda$-presentable, $X_0=\colim X_{m0}$ and $\delta_X=\colim\delta_{X_m}$. Hence $UF(\delta_X)=\colim UF(\delta_{X_m})$ and, since surjections are closed under $\lambda$-directed colimits (see \ref{surj}), $UF(\delta_X)$ is a surjection.

Consider a surjective morphism $f:Y\to X$ in $\cv$ and $f_0:Y_0\to X_0$ be its underlying morphism. Then $f\delta_Y=\delta_Xf_0$ and thus $UF(f)UF(\delta_Y)=UF(\delta_X)UF(f_0)$. Since epimorphisms in $\Set$ split, $f_0$ is a split epimorphism and $UF(f_0)$ is surjective. Hence $UF(f)$ is surjective. 

II. Let $T$ be a $\lambda$-ary $\cv$-monad on $\cv$ preserving surjections. Let $\ct$ be the corresponding $\lambda$-ary 
$\cv$-theory. We will show that $\ct$ is discrete. Consider $f:FI\to FX$ and $\tilde{f}:I\to UFX$ its adjoint transpose. Since $\delta_X$ is surjective and $T$ preserves surjections, $UF(\delta_X)$ is surjective. Thus there is $g:I\to UFX_0$ such that $UF(\delta_X)g=\tilde{f}$. Let $\tilde{g}:FI\to FX_0$ be the adjoint transpose of $g$. Then $f=F(\delta_X)\tilde{g}$ and thus $\ct$ is discrete.
\end{proof}

\begin{exams}
{
\em
(1) \ref{discrete1} (together with \ref{compare}) gives the result of J. Ad\' amek that sets of inequations in context in finitary signatures correspond to enriched finitary mo\-nads preserving surjections (see his talk "Finitary monads on $\Pos$" at the conference Category theory CT20-21).

(2) Similarly, \ref{discrete1} gives the result of J. Ad\' amek \cite{A} that $\omega_1$-basic quantitative equational theories (in signatures with discrete arities) correspond to enriched $\omega_1$-ary monads preserving surjections.
}
\end{exams}

\begin{rem}\label{discrete2}
{
\em
Let $\cd_\lambda$ consist of discrete $\lambda$-presentable objects. Then
$\cd_\lambda$-theories correspond to the $\lambda$-ary discrete Lawvere theories of \cite{P1}. Over $\Pos$ these theories are discrete in the sense of \ref{discrete} and inequations of terms are in discrete contexts.
Following \cite{ADV} $\cd_\omega$-theories correspond to finitary enriched monads preserving reflexive coinserters. Over $\Met$, $\cd_{\aleph_1}$-theories correspond to unconditional quantitative theories of \cite{MPP} and \cite{MPP1} and are discrete in the sense of \ref{discrete}. In general, we do not know whether discrete Lawvere theories are discrete in the sense of \ref{discrete}.
}
\end{rem}

\section{Birkhoff subcategories}
\begin{assume}\label{ass} 
Throughout this section, we assume, in addition to \ref{ass1}, that $\ct$ is a discrete $\lambda$-ary $\cv$-theory and $(\Surj,\Inj)$ is a proper $\mu$-convenient factorization system in $\cv_0$ where $\mu\leq\lambda$. 
\end{assume}

\begin{obs}\label{obsv}
{
\em
Recall that $(\Surj,\Inj)$ is proper if surjections are epimorphisms and injections are monomorphisms. Let $T$ be the $\cv$-monad induced by $\ct$. The underlying category $\Alg(\ct)_0$ is the category of algebras for the underlying monad $T_0$. Let $U_0:\Alg(\ct)_0\to\cv_0$ be the forgetful functor. We will say that a morphism $f:A\to B$ in $\Alg(\ct)_0$ is a surjection (injection) if $U(f)$ is a surjection (injection). Following \cite[Chapter 3, Proposition 4.17]{M}, (surjections, injections) form a proper factorization system on $\Alg(\ct)_0$. Here we need that $\ct$ is discrete because then, following \ref{discrete1}, $T_0$ preserves surjection. Given a morphism $f:A\to B$ of $\ct$-algebras and 
$$
U_0A \xrightarrow{\ e } C \xrightarrow{\ m } U_0B 
$$ 
$(\Surj,\Inj)$-factorization of $U_0(f)$ then $f$ factorizes as
$$
A \xrightarrow{\ \overline{e} } \overline{C} \xrightarrow{\ \overline{m} } B 
$$ 
where $U_0\overline{C}=C$, $U_0(\overline{e})=e$ and $U_0(\overline{m})=m$.

Since $\Alg(\ct)_0$ is locally presentable, (strong epimorphisms, monomorphisms) is a factorization system on $\Alg(\ct)_0$ (see \cite[1.61]{AR3}). Since injections are monomorphisms, strong epimorphisms in $\Alg(\ct)_0$ are surjections.
}
\end{obs}

Following \cite[Chapter 3, 3.1]{M}, a \textit{Birkhoff subcategory} of $\Alg(\ct)_0$ is a full replete subcategory of $\Alg(\ct)_0$ closed under products, subalgebras and $U_0$-split quotients. 
Here, a morphism $g:K\to L$ in $\Alg(\ct)_0$ is $U_0$-split if $U_0(f)$ is a split epimorphism.   Hence, for a Birkhoff subcategory $\cl$ of $\Alg(\ct)_0$, the reflections $\rho_K:K\to K^\ast$ are strong epimorphisms.

\begin{lemma}\label{refl}
A Birkhoff subcategory of $\Alg(\ct)_0$ is $\cv$-reflective in $\Alg(\ct)$.
\end{lemma}
\begin{proof}
Let $\cl$ be a Birkhoff subcategory of $\Alg(\ct)$. Since $\cl_0$ is reflective in $\Alg(\ct)_0$, it suffices to show that $\cl$ is closed in $\Alg(\ct)$ under cotensors (see \cite[6.7.6]{B}. Consider $V$ in $\cv$ and $A$ in $\cl$. Since the cotensor functor $[V,-]:\Alg(\ct)\to\Alg(\ct)$ has a left $\cv$-adjoint $V\otimes -$ 
(see \cite[6.5.6]{B}), and $\delta_V:V_0\to V$ is an epimorphism,  $[\delta_V,A]:[V,A]\to [V_0,A]$ is a monomorphism. Since $[V_0,A]$ is a power of $A$, $[V,A]$ is in $\cl$.
\end{proof}

Hence Birkhoff subcategories of $\Alg(\ct)_0$ are $\cv$-subcategories and,
in what follows, we will take them as subcategories of $\Alg(\ct)$.
The following definition goes back to \cite{H,HR}.

\begin{defi}\label{equation}
{
\em
An $\ct$-\textit{equation} $p=q$ is a pair of morphisms $p,q:FY\to FX$ in $\Alg(\ct)$. A $\ct$-\textit{equational theory} $E$ is a set of $\ct$-equations.

A $\ct$-algebra $A$ \textit{satisfies} a $\ct$-equation $p=q$ if
$hp=hq$ for every morphism $h:FX\to A$. It satisfies a $\ct$-equational theory $E$ if it satisfies all equations of $E$.

$\Alg(E)$ will be the full subcategory of $\Alg(\ct)_0$ consisting of $\ct$-algebras satisfying all equations from $E$.
}
\end{defi}

\begin{propo}\label{birkhoff0} 
$\Alg(E)$ is a Birkhoff subcategory of $\Alg(\ct)$ for every $\ct$-equational theory $E$. 
\end{propo}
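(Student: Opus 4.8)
The plan is to verify directly the three closure conditions in Manes' definition of a $\Surj$-Birkhoff subcategory: closure under products, under $\Inj$-subalgebras, and under $U_0$-split quotients. Each verification uses only the universal property of the free algebras $FX$ (via the free/forgetful adjunction $F\dashv U_0$) together with the definition of satisfaction; in particular nothing beyond what is recorded above about the factorization systems on $\cv_0$ and on $\Alg(\ct)_0$ is needed. An alternative would be to invoke the characterization quoted above from \cite[Theorem 4.23]{M}, of $\Surj$-Birkhoff subcategories as the reflective subcategories with surjective reflections, but that would force one to construct the reflection of $\Alg(\ct)_0$ onto $\Alg(E)$ explicitly (as a quotient by the congruence generated by $E$), so I would prefer the direct route.

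For products: given a family $(A_i)_{i\in I}$ of $E$-algebras, form $A=\prod_i A_i$ in $\Alg(\ct)_0$ with projections $\pi_i$. For an equation $p,q\colon FY\to FX$ of $E$ and any $h\colon FX\to A$ one has $\pi_i(hp)=(\pi_i h)p=(\pi_i h)q=\pi_i(hq)$ since each $A_i$ satisfies $p=q$; as the $\pi_i$ are jointly monic, $hp=hq$, so $A\in\Alg(E)$ (the empty product, i.e.\ the terminal algebra, satisfies every equation trivially). For $\Inj$-subalgebras: if $g\colon K\to L$ is a morphism of $\Alg(\ct)_0$ with $U_0(g)$ an injection and $L\in\Alg(E)$, then for any $h\colon FX\to K$ and any $p=q$ in $E$ we get $g(hp)=(gh)p=(gh)q=g(hq)$, and since the factorization system $(\Surj,\Inj)$ on $\Alg(\ct)_0$ is proper, $g$ is a monomorphism, whence $hp=hq$ and $K\in\Alg(E)$.

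For $U_0$-split quotients: let $f\colon L\to K$ be a morphism of $\Alg(\ct)_0$ with $L\in\Alg(E)$ and let $s\colon U_0K\to U_0L$ be a section of $U_0(f)$ in $\cv_0$. Given $h\colon FX\to K$, let $h^\flat\colon X\to U_0K$ be its transpose under $F\dashv U_0$ and let $g\colon FX\to L$ be the morphism transposing $s\circ h^\flat$; then $(fg)^\flat=U_0(f)\circ s\circ h^\flat=h^\flat$, so $fg=h$. Since $L$ satisfies every $p=q$ in $E$, $hp=f(gp)=f(gq)=hq$, so $K\in\Alg(E)$. The only step carrying any real content is this last one --- lifting a morphism into $K$ along a section that need not be an algebra homomorphism --- and even there the adjunction does the work; the product and $\Inj$-subalgebra cases are immediate from joint monicity of product projections and from injections being monic.
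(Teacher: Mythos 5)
Your proposal is correct and follows essentially the same route as the paper: products and $\Inj$-subalgebras are handled by joint monicity and by injections being monomorphisms (the paper dismisses these as "clearly"), and the $U_0$-split quotient case is the paper's exact argument of transposing $h$, composing with the section $s$, and transposing back to get a lift $\tilde g$ with $f\tilde g=h$. No gaps.
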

\begin{proof}
$\Alg(E)$ is clearly closed under products and subalgebras. Let 
$f:A\to B$ be a $U_0$-split quotient of an algebra $A$ satisfying $E$, i.e., there is $s:UB\to UA$ such that $U(f)s=\id_{UB}$. Let $p,q:FY\to FX$ give an equation $p=q$ from $E$. Consider $h:FX\to B$ and $\tilde{h}:X\to UB$ be the adjoint transpose of $h$. Let $g=s\tilde{h}$ and $\tilde{g}:FX\to A$ be the adjoint transpose of $g$. Since $U(f)g=U(f)s\tilde{h}=\tilde{h}$, we have $f\tilde{g}=h$. Since $\tilde{g}p=\tilde{g}q$, we get 
$$
hp=f\tilde{g}p=f\tilde{g}q=hq.
$$ 
Thus $B$ satisfies $E$.
\end{proof}
 
Recall that an object $V$ in $\cv$ is $\mu$-\textit{generated} with respect to $\Inj$ if $\cv_0(V,-):\cv_0\to\Set$ preserves $\mu$-directed colimits of injections.

\begin{defi}
{
\em
We say that a morphism $f:A\to B$ in $\cv_0$ is a $\mu$-\textit{pure epimorphism} if it is projective with respect to $\mu$-generated objects. Explicitly, for every $\mu$-generated object $X$, all morphisms $X\to B$ factor through $f$.  
}
\end{defi} 

\begin{rem}
{
\em
(1) For $\mu=\lambda$, this concept was introduced in \cite{AR1}.

(2) Every $\mu$-pure epimorphism $f:A\to B$ is an epimorphism. Indeed,
assume that $uf=vf$ for $u,v:B\to C$. Consider  a $\mu$-generated object $X$ and $g:X\to B$. Since $g$ factors through $f$, we have $ug=vg$.
Thus $u=v$.
 
(3) Every split epimorphism is $\mu$-pure. 

(4) A morphism $f:A\to B$ $\omega$-pure in $\Met$ iff it is $\omega$-reflexive in the sense of \cite{MPP}. 
}
\end{rem}

\begin{defi}
{
\em
We say that $\cl$ is a $\mu$-\textit{Birkhoff subcategory} of $\Alg(\ct)$ if it is a Birkhoff subcategory closed under quotients $f:A\to B$ such that $Uf$ is $\mu$-pure.
}
\end{defi}
Thus $\cl$ is a $\mu$-Birkhoff subcategory of $\Alg(\ct)$ if and only if it is a full subcategory closed under products, subalgebras and $\mu$-pure quotients.

Recall that $\cv_0$ is $\Inj$-\textit{locally $\mu$-generated} if it has a set $\cx$ of $\mu$-generated objects with respect to $\Inj$ such that every object of $\cv_0$ is a $\mu$-directed colimit of objects from $\cx$ and injections (see \cite{DR}).

\begin{rem}
{
\em
(1) Following \cite[Remark 2.17]{DR}, $\cv_0$ is $\Inj$-locally $\lambda$-generated.  

(2) $\Met$ is $\Inj$-locally $\omega$-generated because finite metric spaces are $\omega$-generated with respect to $\Inj$ (following 
\cite[Remark 2.5(2)]{AR}).
}
\end{rem}

The following definition and theorem were motivated by \cite{MU}.

\begin{defi}
{
\em
A $\ct$-equation $p=q$, where $p,q:FY\to FX$, will be called $\mu$-\textit{clustered} if $X$ is a coproduct of $\mu$-generated objects.
}
\end{defi}

\begin{theo}\label{birkhoff}
Assume \ref{ass} and, moreover, let $\cv_0$ be $\Inj$-locally $\mu$-generated. Then $\cl$ is a $\mu$-Birkhoff subcategory of $\Alg(\ct)$ if and only if $\cl=\Alg(E)$ where all equations from $E$ are $\mu$-clustered.  
\end{theo}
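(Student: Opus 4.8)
The plan is to prove the two implications separately; the direction ``$\mu$-clustered $E$ $\Rightarrow$ $\cl$ is $\mu$-Birkhoff'' is a short diagram chase, while the converse is the real Birkhoff-type argument. For the first direction, suppose $\cl=\Alg(E)$ with every equation of $E$ $\mu$-clustered. By Proposition~\ref{birkhoff0}, $\Alg(E)$ is already a $\Surj$-Birkhoff subcategory, so it remains to check closure under quotients $f\colon A\to B$ with $Uf$ $\mu$-pure. Let $A\in\Alg(E)$ and let $p,q\colon FY\to FX$ be an equation of $E$ with $X=\coprod_iX_i$, each $X_i$ $\mu$-generated. Since $F$ preserves coproducts, $FX=\coprod_iFX_i$, so a morphism $h\colon FX\to B$ amounts to a family $h_i\colon FX_i\to B$; as $X_i$ is $\mu$-generated and $Uf$ is $\mu$-pure, each transpose $\widetilde{h_i}\colon X_i\to UB$ lifts along $Uf$, and transposing back and copairing produces $g\colon FX\to A$ with $fg=h$. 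Then $gp=gq$ since $A\in\Alg(E)$, whence $hp=fgp=fgq=hq$; thus $B$ satisfies $p=q$, and $B\in\Alg(E)$.

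Conversely, let $\cl$ be $\mu$-Birkhoff. Since $T$ preserves surjections (Theorem~\ref{discrete1}), by the cited results of Manes~\cite{M} the subcategory $\cl$ is reflective in $\Alg(\ct)_0$ with every reflection $\rho_A\colon A\to A^\ast$ a surjection, and $\cl$ is monadic over $\cv_0$. For any object $W$ that is a coproduct of $\mu$-generated objects let $\kappa_1^W,\kappa_2^W\colon K_W\rightrightarrows FW$ be the kernel pair of $\rho_{FW}$ and let $e_W$ be the $\ct$-equation $\kappa_1^W\varepsilon_{K_W},\,\kappa_2^W\varepsilon_{K_W}\colon FUK_W\to FW$, where $\varepsilon$ is the counit of $F\dashv U$; each $e_W$ is $\mu$-clustered, and every $B\in\cl$ satisfies it, because any $h\colon FW\to B$ factors through $\rho_{FW}$ while $\rho_{FW}\kappa_1^W=\rho_{FW}\kappa_2^W$. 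Let $E$ consist of the equations $e_W$ for $W$ running over a representative set of coproducts of fewer than $\lambda$ many $\mu$-generated objects. Then $\cl\subseteq\Alg(E)$, and the claim is that $\Alg(E)\subseteq\cl$.

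To prove the latter, take $A\in\Alg(E)$, write $UA=\colim_iX_i$ as a $\mu$-directed colimit of $\mu$-generated objects, put $X=\coprod_iX_i$, and let $e\colon X\to UA$ be the canonical morphism; it is a regular epimorphism, hence a surjection, so $d:=\varepsilon_A\circ Fe\colon FX\to A$ has $Ud=U\varepsilon_A\circ Te$ a surjection (using Theorem~\ref{discrete1} and that $U\varepsilon_A$ is split epi), and $Ud$ is moreover $\mu$-pure: from $e=Ud\circ\eta_X$ one sees that any morphism from a $\mu$-generated object into $UA$, which factors through some colimit injection $X_i\to UA$, factors through $Ud$. Writing $FX$ as the $\lambda$-directed colimit of the free algebras $FW$ on the subcoproducts $W$ of $X$ on fewer than $\lambda$ indices, and using that $\lambda$-filtered colimits commute with kernel pairs in $\Alg(\ct)_0$, one gets $\ker\rho_{FX}=\colim\ker\rho_{FW}$; since $A$ satisfies each $e_W$, the morphism $d$ coequalizes each $\ker\rho_{FW}$, hence coequalizes $\ker\rho_{FX}$. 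Therefore $d$ factors as $d=\overline e\circ\rho_{FX}$ for some $\overline e\colon(FX)^\ast\to A$; here $\overline e$ is a surjection (surjections being right-cancellable) and $U\overline e$ is $\mu$-pure (a right-hand factor of the $\mu$-pure $Ud$). Thus $A$ is a $\mu$-pure quotient of $(FX)^\ast\in\cl$, so $A\in\cl$, and $\Alg(E)=\cl$.

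The step I expect to be the genuine obstacle is the inference ``$d$ coequalizes $\ker\rho_{FX}$ $\Rightarrow$ $d$ factors through $\rho_{FX}$'', i.e.\ the fact that the reflection of a free algebra is the coequalizer of its own kernel pair and not merely a surjection; this has to be extracted from the monadicity of $\cl$ over $\cv_0$ together with the freeness of $FX$ (alternatively, by presenting $(FX)^\ast$ as a reflexive coequalizer of free algebras and carrying the defining equations along that presentation). The only other point needing care is the reduction of $E$ to a set indexed by bounded coproducts, which again rests on the colimit computation $\ker\rho_{FX}=\colim\ker\rho_{FW}$: it shows that satisfaction of all the $e_W$ with $W$ bounded already forces satisfaction of every $\mu$-clustered equation valid in $\cl$.
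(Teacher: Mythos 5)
Your first direction (every $E$-equation $\mu$-clustered $\Rightarrow$ $\Alg(E)$ is $\mu$-Birkhoff) is correct and is exactly part I of the paper's proof. The converse, however, has a genuine gap, and it sits precisely at the two places you flagged. First, your choice of $E$ is too weak. The reflections $\rho_{FW}$ are only \emph{surjections} in $\Alg(\ct)_0$, and in this setting surjections need not be regular or strong epimorphisms; consequently $\rho_{FW}$ is not the coequalizer of its kernel pair, and ``$d$ coequalizes the kernel pair of $\rho_{FX}$'' does not produce a factorization of $d$ through $\rho_{FX}$. Monadicity of $\cl$ over $\cv_0$ together with freeness of $FX$ will not repair this: for instance over $\Met$, truncating all distances at $1$ gives a surjective reflection onto the spaces of diameter at most $1$ which is \emph{bijective on points}; its kernel pair is the diagonal, so every morphism coequalizes it without factoring through it, and your equation $e_W$ degenerates to the trivial equation $p=p$. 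In other words, kernel pairs see only the ``equational'' part of the congruence and lose the enriched (metric, order) part. This is exactly why the paper does not generate $E$ from kernel pairs: it takes $E$ to consist of \emph{all} $\mu$-clustered pairs $p,q:FY\to FX$ valid in $\cl$, with arbitrary free domains $FY$ whose enrichment carries the missing information, and proves $\Alg(E)\subseteq\cl$ by comparing the monads of $\cl$ and $\Alg(E)$ and by a pullback of $\rho'_Z\times\rho'_Z$ along $\langle p,q\rangle$, never invoking effectiveness of the reflections.

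Second, the reduction of $E$ to a set indexed by coproducts of fewer than $\lambda$ many $\mu$-generated objects rests on the claim that the kernel pair of $\rho_{FX}$ is the $\lambda$-directed colimit of the kernel pairs of the $\rho_{FW}$. Commutation of $\lambda$-filtered colimits with pullbacks in $\Alg(\ct)_0$ only applies if $\rho_{FX}$ is itself the colimit of the $\rho_{FW}$ in the arrow category, i.e.\ if $(FX)^\ast=\colim_W (FW)^\ast$ computed in $\Alg(\ct)_0$; this amounts to the reflection into $\cl$ preserving these colimits, which is not available --- compare the paper's remark that Birkhoff subcategories need not be closed under directed colimits (the $\Set^{\Bbb N}$ example), and note that reducing clustered equations to contexts of bounded size is essentially the content of Theorem~\ref{birkhoff1}, which requires the additional hypotheses of strong connectedness and preservation of $\mu$-directed colimits of injections. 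Concretely, if $a',b':C\to FW$ are identified by $\rho_{FX}$ after composing with $F(w):FW\to FX$, they need not be identified by $\rho_{FW}$, because a morphism $FW\to B$ with $B\in\cl$ need not extend along $F(w)$. So even granting the factorization step, the bounded equations $e_W$ do not suffice at the level of generality of Theorem~\ref{birkhoff}.
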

\begin{proof}
I. Necessity: Let $E$ consist of $\mu$-clustered equations. We have to show that $\Alg(E)$ is closed in $\Alg(\ct)$ under quotients $f:A\to B$ such that $Uf$ is $\mu$-pure. So, let $f:A\to B$ be such a homomorphism. Consider an equation $p=q$ from $E$ where $p,q:FY\to FX$ and $X=\coprod_i X_i$ with $X_i$ $\mu$-generated. Let $h:FX\to B$ and $\tilde{h}:X\to UB$ be its adjoint transpose. If $u_i:X_i\to X$ are coproduct injections, then there are $v_i:X_i\to UA$ such that $U(f)v_i=\tilde{h}u_i$ for every $i$. We get $v:X\to UA$
such that $U(f)v=\tilde{h}$. Therefore
$$
hp=f\tilde{v}p=f\tilde{v}q=hq.
$$
Hence $B$ satisfies $E$.

II. Sufficiency: Conversely, let $\cl$ be a $\mu$-Birkhoff subcategory of $\Alg(\ct)$. Following \ref{refl}, $\cl$ is $\cv$-reflective in $\Alg(\ct)$. Moreover, following \ref{obsv}, the reflections $\rho_K:K\to K'$ are strong epimorphisms. 

Let $U':\cl\to\cv_0$ be the restriction of $U_0$ on $\cl$ and $F'$ its left adjoint. Let $E$ be given by pairs $(p,q)$ where $p,q:FY\to FX$ with $X=\coprod X_i$, where $X_i$ are $\mu$-generated with respect to $\Inj$ $X_i$, such that every $A\in\cl$ satisfies the equation 
$p=q$. This is equivalent to $\rho_{FX}p=\rho_{FX}q$. Clearly $\cl\subseteq\Alg(E)$. Following I., $\Alg(E)$ is a $\mu$-Birkhoff subcategory of $\Alg(\ct)$. Hence $\Alg(E)$ is $\cv$-reflective in $\Alg(\ct)$ and the reflections $\rho'_K:K\to K''$ are strong epimorphisms.

Let $U'':\Alg(E)\to\cv_0$ be the restriction of $U$ on $\Alg(E)$ and $F''$ its left adjoint. We get $\rho':F\to F''$ and $\tau:F''\to F'$ such that $\tau\rho'=\rho_F$. Since both $\cl$ and $\Alg(E)$ are monadic, it suffices to prove that $\tau$ is a natural isomorphism.  

Consider an arbitrary $X$ in $\cv$ and express $UFX$ as a $\mu$-directed colimit 
$$
(z_m:Z_m\to UFX)_{m\in M}
$$ 
of injections between objects $Z_m$  which are $\mu$-generated with respect to $\Inj$. Let $\eta_Z:Z\to UFZ$, $Z$ in $\cv$, be the adjunction units and $\tilde{z}_m: FZ_m\to FX$ be given as $U(\tilde{z}_m)\eta_{Z_m}=z_m$. Let 
$$
t_X:F(\coprod Z_m)\to FX
$$ 
be the induced morphism, i.e., $t_XF(u_m)=\tilde{z}_m$ where 
$(u_m:Z_m\to \coprod Z_m)_{m\in M}$ is the coproduct. Then $U(t_X)$ is a $\mu$-pure epimorphism. Indeed, every morphism $f:Z\to UFX$ with $Z$ $\mu$-generated with respect to $\Inj$ factors through $z_m$ for some $m\in M$ as $f=z_mg$ where $g:Z\to Z_m$. Hence 
$$
\tilde{f}=\tilde{z}_mF(g)=t_XF(u_m)F(g)
$$
and thus
$$
f=U(\tilde{f})\eta_Z=U(t_X)UF(u_mg)\eta_Z.
$$

We have to prove that $\tau_X$ is an isomorphism for every $X$.
Consider the commutative diagram
$$  
  \xymatrix@=4pc{
    FUF''X \ar[r]^{\eps_{F''X}} & F''X  \\
    F\coprod Z_m \ar [u]^{t_{UF''X}} \ar [r]^{\rho'_{\coprod Z_m}} & 
F''\coprod Z_m \ar[u]_{s} 
  }
  $$
where $\eps:FU\to\Id$ is the counit of the adjunction $F\dashv U$.
The arrow $s$ is given by $\rho'_{\coprod Z_m}$ being the reflection of $F\coprod Z_m$ to $\Alg(E)$. Since $U(\eps_{F''X})$ is a split epimorphism and $U(t_{UF''X})$ is a $\mu$-pure epimorphism, the composition $U(\eps_{F''X}t_{UF''X})$ is a $\mu$-pure epimorphism. Thus $U(s)$ is a $\mu$-pure epimorphism. Thus it suffices to show that $F''\coprod Z_m$ belongs to $\cl$, i.e., that $\tau_Z$ is an isomorphism where $Z=\coprod Z_m$. 

Since $\rho_Z$ is a strong epimorphism, $\tau_Z$ is a strong epimorphism. Thus it suffices to show that $\tau_Z$ is a monomorphism. Consider $p,q:FY\to F''Z$ such that $\tau_Zp=\tau_Zq$. We have to show that $p=q$. Consider the pullback
$$  
  \xymatrix@=4pc{
    FZ\times FZ \ar[r]^{\rho'_Z\times\rho'_Z} & F''Z\times F''Z  \\
    P \ar [u]^{r} \ar [r]^{e} & FY \ar[u]_{\langle p,q\rangle}  
  }
  $$
and take the compositions $p',q':P\to FZ$ of the product projections with $r$. Let $p'',q'':FUP\to FZ$ be the compositions of $p'$ and $q'$ with $\eps_P:FUP\to P$. Since 
$$
\rho_{FZ}p''=\tau_Z\rho'_Zp''=\tau_Zpe\eps_P=\tau_Zqe\eps_P=\rho_{FZ}q'',
$$
$p''=q''$ belongs to $E$. Thus we have 
$\rho'_Zp''=\rho'_Zq''$,
i.e., $pe=qe$. Since $\rho'_Z$ is a strong epimorphism, it is a surjection (see \ref{obsv}). Following \ref{surj1}, $e$ is a surjection, hence an epimorphism. Thus $p=q$ and we have proved that $\tau_Z$ is a monomorphism.
\end{proof}

\begin{defi}\label{connected}
{
\em
We will call a category $\ck$ \textit{strongly connected} if for every pair of objects $K$ and $K'$ of $\ck$, where $K'$ is not initial, there is a morphism $K\to K'$.  
}
\end{defi}

\begin{lemma}\label{connected1}
The following conditions are equivalent:
\begin{enumerate}
\item $\ck$ is strongly connected, and 
\item for a coproduct $\coprod_{m\in M} K_m$ of non-initial objects 
$K_m$, coproduct components $u_m:K_m\to\coprod K_m$ are split monomorphisms.
\end{enumerate}
\end{lemma}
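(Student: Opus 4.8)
The plan is to derive both implications straight from the universal property of coproducts; I expect no heavy machinery to be required, just a careful handling of the initiality hypotheses.

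For the implication $(1)\Rightarrow(2)$, I would fix a coproduct $\coprod_{m\in M}K_m$ of non-initial objects with injections $u_m$, fix one index $n\in M$, and exhibit an explicit retraction of $u_n$. By the universal property of the coproduct this reduces to giving a morphism $K_m\to K_n$ for each $m\in M$: for $m=n$ take $\id_{K_n}$, and for $m\neq n$ invoke strong connectedness — which applies exactly because the codomain $K_n$ is non-initial — to choose any morphism $K_m\to K_n$. The induced morphism $r_n\colon\coprod_{m\in M}K_m\to K_n$ then satisfies $r_nu_n=\id_{K_n}$, so $u_n$ is a split monomorphism.

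For $(2)\Rightarrow(1)$, I would take objects $K$ and $K'$ with $K'$ non-initial and produce a morphism $K\to K'$. If $K$ is initial there is already a (unique) such morphism; otherwise $K$ and $K'$ are both non-initial, so $K+K'$ is a coproduct of non-initial objects and, by $(2)$, its second injection $u_2\colon K'\to K+K'$ is split by some $r\colon K+K'\to K'$. Composing with the first injection $u_1\colon K\to K+K'$ then gives the morphism $ru_1\colon K\to K'$.

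The only steps that need attention — more bookkeeping than genuine obstacle — concern the role of initiality. In $(1)\Rightarrow(2)$, non-initiality of the $K_m$ is precisely the hypothesis that licenses the appeal to strong connectedness with $K_n$ as target. In $(2)\Rightarrow(1)$, the case where $K$ is initial must be peeled off separately, since then $K+K'$ need not be a coproduct of non-initial objects and $(2)$ does not apply directly, even though the required morphism exists trivially. One also uses tacitly that the binary coproduct $K+K'$ exists, which is automatic in the setting of the paper where $\ck=\cv_0$ is cocomplete.
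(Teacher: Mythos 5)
Your proof is correct and follows essentially the same route as the paper: for $(1)\Rightarrow(2)$ you build the cocone $v_m\colon K_m\to K_n$ with $v_n=\id_{K_n}$ via strong connectedness and induce the retraction, and for $(2)\Rightarrow(1)$ you apply $(2)$ to $K\coprod K'$. Your explicit treatment of the case where $K$ is initial is a small extra care the paper leaves implicit, but it changes nothing substantive.
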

\begin{proof}
If $\ck$ is strongly connected and $u_m:K_m\to\coprod K_m$ a coproduct of non-initial objects, then there is a cocone $v_m:K_m\to K_n$ for every $n$ with $v_n=\id_{K_n}$. Hence $u_n$ is a split monomorphism.

Conversely, (2) applied to $K\coprod K'$ yields $K\to K'$.
\end{proof}

\begin{rem}\label{connected2}
{
\em
In \ref{connected1}, all subcoproduct morphisms 
$\coprod_{m\in N} X_m\to\coprod_{m\in M} X_m$, $N\subseteq M$, are split mono\-morphisms. In fact, 
$\coprod_{m\in M} X_m=(\coprod_{m\in N} X_m)\coprod(\coprod_{m\notin N}X_m)$.
}
\end{rem}

\begin{theo}\label{birkhoff1}
Assume \ref{ass} and, moreover, let $\cv_0$ be $\Inj$-locally $\mu$-generated, strongly connected and let $U$ preserve $\mu$-directed colimits of injections. Then $\cl$ is a $\mu$-Birkhoff subcategory of $\Alg(\ct)$ if and only if $\cl=\Alg(E)$ where all equations $p=q$ from $E$ have $p,q:FY\to FX$ with $X$ and $Y$ being $\mu$-generated with respect to $\Inj$. 
\end{theo}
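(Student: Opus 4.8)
\medskip
\noindent\textbf{Proof strategy.}\quad The ``if'' direction is immediate from Theorem~\ref{birkhoff}: if $\cl=\Alg(E)$ with every $E$-equation $p,q\colon FY\to FX$ having $X$ and $Y$ $\mu$-generated, then $E$ is in particular $\mu$-clustered, so $\cl$ is a $\mu$-Birkhoff subcategory. For the converse the plan is to start from Theorem~\ref{birkhoff}, which gives a $\ct$-equational theory $E'$ with all equations $\mu$-clustered and $\Alg(E')=\cl$, and to rewrite each $E'$-equation as an equivalent family of equations whose domains and codomains are $\mu$-generated, in two steps. The first reduction shrinks the domain: given $p,q\colon FY\to FX$ I would write $Y=\colim_j Y_j$ as a $\mu$-directed colimit of $\mu$-generated objects and injections (available since $\cv_0$ is $\Inj$-locally $\mu$-generated), note that $(F\iota_j\colon FY_j\to FY)$ is jointly epimorphic because $F$ preserves colimits, and conclude that an algebra $A$ satisfies $p=q$ iff it satisfies $pF\iota_j=qF\iota_j$ for every $j$; so I may assume $Y$ is $\mu$-generated. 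This step needs neither of the extra hypotheses.

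\medskip
The second reduction shrinks the codomain, and here the new hypotheses enter. Take $p,q\colon FY\to FX$ with $Y$ $\mu$-generated and $X=\coprod_{i\in S}X_i$, the $X_i$ $\mu$-generated; after deleting initial summands assume each $X_i$ is non-initial, and present $X$ as the $\mu$-directed colimit $\colim_\sigma\coprod_{i\in\sigma}X_i$ over the $\mu$-small subsets $\sigma\subseteq S$. By Lemma~\ref{connected1} and Remark~\ref{connected2}, strong connectedness makes each sub-coproduct inclusion $w$ a split monomorphism in $\cv_0$, and a split monomorphism is an injection (its surjective part in the $(\Surj,\Inj)$-factorization is again a split monomorphism, hence --- since surjections are epimorphisms --- an isomorphism). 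As $F$ preserves colimits and $UF(w)$, being $T$ applied to a split monomorphism, is again a split monomorphism and hence an injection, the hypothesis that $U$ preserves $\mu$-directed colimits of injections yields $UFX=\colim_\sigma UF\coprod_{i\in\sigma}X_i$. Since $Y$ is $\mu$-generated, the transposes $\tilde p,\tilde q\colon Y\to UFX$ factor through $UF(w^\ast)$ for some $\mu$-small $\sigma^\ast\subseteq S$, where $w^\ast\colon X^\ast\to X$ is the inclusion of $X^\ast:=\coprod_{i\in\sigma^\ast}X_i$; this $X^\ast$ is $\mu$-generated (a $\mu$-small coproduct of $\mu$-generated objects, cf.\ \cite{DR}), and $p=F(w^\ast)p^\ast$, $q=F(w^\ast)q^\ast$ for suitable $p^\ast,q^\ast\colon FY\to FX^\ast$. (If $\sigma^\ast=\emptyset$ while $S\neq\emptyset$, enlarge $\sigma^\ast$ to a singleton, through which the empty sub-coproduct inclusion factors; if $S=\emptyset$ then $X=0$ is already $\mu$-generated.) Finally I would check that $A$ satisfies $p=q$ iff it satisfies $p^\ast=q^\ast$: one implication is precomposition with $F(w^\ast)$; for the other, given $h^\ast\colon FX^\ast\to A$ I transpose it, compose with a retraction of $w^\ast$ (which exists as $w^\ast$ is a split monomorphism), and transpose back to an $h\colon FX\to A$ with $hF(w^\ast)=h^\ast$, whence $h^\ast p^\ast=hp=hq=h^\ast q^\ast$.

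\medskip
Carrying out both reductions over all of $E'$ produces a theory $E$ with $\Alg(E)=\Alg(E')=\cl$ and all equations in the required form, which proves the converse. I expect the codomain reduction to be the main obstacle: turning the coproduct $X=\coprod X_i$ into a single $\mu$-generated object hinges on $UFX$ being the $\mu$-directed colimit of the $UF$ of its $\mu$-small sub-coproducts, which is exactly why one needs $\cv_0$ strongly connected (so that the sub-coproduct inclusions, and their images under the monad, are split monomorphisms and hence injections) together with the assumption that $U$ preserves $\mu$-directed colimits of injections; strong connectedness is then used a second time to see that the shrunken equation is no weaker than the original one.
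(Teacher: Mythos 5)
Your proposal is correct and follows essentially the same route as the paper: sufficiency via Theorem~\ref{birkhoff}, then reduction of the domain $Y$ through a $\mu$-directed colimit of $\mu$-generated objects, and reduction of the codomain by presenting $X=\coprod X_i$ as a $\mu$-directed colimit of $\mu$-small subcoproducts whose inclusions are split monomorphisms (Lemma~\ref{connected1}, Remark~\ref{connected2}), hence injections, so that $U$ preserving $\mu$-directed colimits of injections lets the transposes of $p,q$ factor through some $\mu$-generated $X^\ast$. The only cosmetic differences are that you verify the equivalence of $p=q$ with $p^\ast=q^\ast$ algebra-by-algebra using the retraction of the split inclusion (the paper phrases the same point via the reflections $\rho$), and you argue directly from properness that split monomorphisms are injections rather than citing Lemma~\ref{fs3}; both are fine.
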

\begin{proof}
Sufficiency is I. of the proof of \ref{birkhoff}. Conversely, let $\cl$
be a $\mu$-Birkhoff subcategory of $\Alg(\ct)$.
Following \ref{birkhoff}, $\cl=\Alg(E)$ where all equations $p=q$ from $E$ have $p,q:FY\to FX$ with $X$ being a coproduct of objects $\mu$-generated with respect to $\Inj$. Express $Y$ as a $\mu$-directed colimit $(y_m:Y_m\to Y)_{m\in M}$ of objects $Y_m$ $\mu$-generated with respect to $\Inj$. Then $F(y_m):FY_m\to FY$ is a $\mu$-directed colimit and $\rho_{FX}p=\rho_{FX}q$ if and only if $\rho_{FX}pF(y_m)=\rho_{FX}qF(y_m)$ for every $m\in M$. Thus we have reduced our equations to $p=q$  where $p,q:FY\to FX$ with $X$ being a coproduct of objects $\mu$-generated with respect to $\Inj$ and $Y$ being $\mu$-generated with respect to $\Inj$.  

Express $X=\coprod_{m\in M}X_m$ as a $\mu$-directed colimit of
subcoproducts $X_N=\coprod_{m\in N} X_m$ where $|N|<\mu$ and $X_m$ are
non-initial. Following \ref{connected2}, $X$ is a $\mu$-directed colimit $x_N:X_N\to X$ of split monomorphisms $x_{NN'}:X_N\to X_{N'}$ where $N\subseteq N'$. Hence $UFX$ is a $\mu$-directed colimit $UF(x_N):UFX_N\to UFX$ of split monomorphisms $UF(x_{NN'}):UFX_N\to UFX_{N'}$. 
Following \ref{fs3}, $UF(x_{NN'})$ are injections. Let $\tilde{p},\tilde{q}:Y\to UFX$ be the adjoint transposes of $p,q:FY\to FX$. Since $Y$ is $\mu$-generated with respect to $\Inj$,  there is $N\subseteq M$, $|N|<\mu$ and $p',q':Y\to UFX_N$ such that $\tilde{p}=x_Np'$ and $\tilde{q}=x_Nq'$.

Now, $p=q$ is an equation from $E$, iff $U(\rho_{FX})\tilde{p}=U(\rho_{FX})\tilde{q}$.
Since $x_N$ is a split monomorphism, this is equivalent to $U(\rho_{FX_N})p'=U(\rho_{X_N})q'$, hence to $\rho_{X_N}p^\ast=\rho_{X_N}q^\ast$ where $p^\ast,q^\ast:FY\to FX_N$ are adjoint transposes of $p',q'$. Since $X_N$ is $\mu$-generated with respect to $\Inj$ (see \cite[Lemma 2.13]{DR}), the proof is finished.
\end{proof}

\begin{rem}\label{birkhoff2}
{
\em
Moreover, $\Alg(E)$ is closed in $\Alg(\ct)$ under $\mu$-directed colimits of injections.

Indeed, let $k_m:K_m\to K$ be a $\mu$-directed colimit of injections where
$K_m$ satisfy $E$. Consider an equation $p=q$  from $E$ where $p,q:FY\to FX$. Since $U$ preserves $\mu$-directed colimits of injections, $F$ preserves $\mu$-generated objects (see \cite[Lemma 3.11]{DR}). Hence 
a morphism $h:FX\to K$ factors through some $k_m:K_m\to K$, $h=k_mh'$.
Since $K_m$ satisfies $E$, $h'p=h'q$. Thus $hp=hq$ and $K$ satisfies $E$.
}
\end{rem}

\begin{rem}
{
\em
The assumption that $\ck$ is strongly connected is needed
in \ref{birkhoff1} because \ref{birkhoff2} is not valid for the category $\Set^{\Bbb N}$ of $\Bbb N$-sorted sets. Indeed, let $T$ be a finitary monad in $\Set^{\Bbb N}$. Since epimorphisms in $\Set^{\Bbb N}$ split, every Birkhoff subcategory of $(\Set^{\Bbb N})^T$ is $\omega$-Birkhoff.
Following \cite{ARV}, it does not need to be closed under directed colimits. (Take the full subcategory of $\Set^{\Bbb N}$ consisting of all
$(X_n)_{n\in\Bbb N}$ such that either $X_n=\emptyset$ for some $n$ or
$X_n=1$ for every $n$.)
}
\end{rem}

As a consequence of \ref{birkhoff1}, we get the Birkhoff theorems over $\Met$ from \cite{MPP}, see \cite[Theorem 2.14]{A}. Here, under 
an $\omega$-ary theory over $\Met$ we mean an $\cf$-theory where $\cf$ consists of finite metric spaces, i.e., of finitely generated metric spaces with respect to $\Inj$. 

Manes \cite{M} defined a $\Surj$-\textit{Birkhoff subcategory} $\cl$ of $\Alg(\ct)_0$ as a full replete subcategory closed under products, $\Inj$-subalgebras and $U_0$-split quotients. Here, a morphism $g:K\to L$ in $\Alg(\ct)_0$ is an $\Inj$-subalgebra of $L$ if $g$ is an injection. Since the monad $T$ preserves surjections, \cite[Chapter 3, Theorem 4.23]{M} shows that a full subcategory $\cl$ of $\Alg(\ct)_0$ is a $\Surj$-Birkhoff subcategory if and only if it is a reflective subcategory such that $U_0(\rho_K)$ is a surjection for every reflection $\rho_K:K\to K^\ast$. Following \cite[Chapter 3, Theorem 3.3]{M}, $\cl$ is monadic over $\cv_0$. Hence every Birkhoff sub\-category of $\Alg(\ct)_0$ is a $\Surj$-Birkhoff subcategory. 

\begin{coro}
Let $\Sigma$ be a $\mu$-ary (discrete) signature where  $\mu=\omega$ or 
 $\mu=\omega_1$. Then a class of quantitative $\Sigma$-algebras is a $\mu$-Surj-Birkhoff subcategory of $\Alg(\Sigma)$ if and only it is given by an $\mu$-basic quantitative equational theory.
\end{coro}
\begin{proof}
At first, we observe that, for a discrete $\mu$-ary $\Met$-theory $\ct$, the functor $U:\Alg(\ct)\to\Met$ preserves $\mu$-directed colimits of isometries. For $\mu=\omega_1$ it is evident and, for $\mu=\omega$, it follows from \cite[Theorem 3.19]{R}.

Following \ref{compare}(2), $\Alg(\Sigma)\cong\Alg(\ct)$ for a discrete $\mu$-ary $\Met$-theory $\ct$. Given a $\mu$-basic quantitative equational theory $E$ then, following \ref{compare}(2) and I. of the proof of \ref{birkhoff}, $\Alg(E)$ is a $\mu$-$\Surj$-Birkhoff subcategory of $\Alg(\Sigma)$.

Conversely, let $\cl$ be a $\mu$-$\Surj$-Birkhoff subcategory of $\Alg(\Sigma)$.  We extend $\Sigma$ to a new signature $\Sigma^\ast$ by adding, for every $\eps>0$, binary operations $f_\eps$ and $g_\eps$   satisfying the quantitative equations
\begin{enumerate}
\item $f_\eps(x,y)=_\eps g_\eps(x,y)$ and
\item $x=_\eps y\vdash f_\eps(x,y)=x,\quad x=_\eps y\vdash g_\eps(x,y)=y.$
\end{enumerate}
Then subalgebras of $\Sigma^\ast$-algebras satisfying (1) and (2) are $\Inj$-subalgebras.
Indeed, let $B$ be a subalgebra of a $\Sigma^\ast$-algebra $A$ satisfying (1) and (2) and consider $a,b\in B$ such that $d_B(a,b)>d_A(a,b)$. Following (2) for $\eps=d_A(a,b)$,
we have 
$$
(f_\eps)_B(a,b)=(f_\eps)_A(a,b)=a,\quad (g_\eps)_B(a,b)=(g_\eps)_A(a,b)=b,
$$
which contradicts (1).
 
Let $\cl^\ast$ consist of all $\Sigma^\ast$-algebras satisfying (1) and (2) whose $\Sigma$-reduct is in $\cl$. Then $\cl^\ast$ is a $\mu$-Birkhoff subcategory of $\Alg(\Sigma^\ast)$. Let $\ct^\ast$ be the discrete $\mu$-ary $\Met$-theory given by $\Sigma^\ast$, (1) and (2). Following \ref{birkhoff1}, $\cl^\ast=\Alg(E^\ast)$ where equations $p=q$ from $E^\ast$ have
$p,q:F^\ast Y \to F^\ast X$ with $X$ and $Y$ being $\mu$-generated with respect to $\Inj$. Let $E'$ consist of those equations from $E^\ast$ which do not contain the added operations $f_\eps$ and $g_\eps$. Clearly,
$\Alg(E')\subseteq\cl$. Consider $A\in\cl$ and interpret $f_\eps$ and $g_\eps$ on $A$ as follows: $(f_\eps)_A(x,y)=x$ and $(g_\eps)_A(x,y)=y$
if $d(x,y)=\eps$ and $(f_\eps)_A=(g_\eps)_A$ otherwise. Since we get
an $E^\ast$-algebra, $\cl\subseteq\Alg(E')$.
\end{proof}

\section{Appendix}
\cite[Proposition 5.1]{R} claims that finite products commute with reflexive coequalizers in $\Met$. Equivalently, that the functor
$$
X\times -:\Met\to\Met
$$
preserves reflexive coequalizers. Since this functor preserves coproducts,
the preservation of reflexive coequalizers is equivalent to the preservation of all colimits, i.e., following the Special Adjoint Functor Theorem, to the cartesian closedness of $\Met$. Thus \cite[Proposition 5.1]{R}  is not true and I am grateful to Jason Parker for pointing this out. 

In fact, \cite[Proposition 5.1]{R} proves that the functor 
$$
X\times -:\Dist\to\Dist
$$
preserves reflexive coequalizers where $\Dist$ is the category of distance spaces and nonexpanding mappings. Recall that a \textit{distance space} is equipped with a metric $d:X\to[0,\infty]$ satisfying $d(x,y)=d(y,x)$ and $d(x,x)=0$. The category $\Dist$ is cartesian closed (\cite{ARe}). 

The category $\PMet$ of pseudometric spaces (i.e., with the triangle inequality added) is a reflective subcategory of $\Dist$; the reflection of $(X,d)$ is obtained by the \textit{pseudometric modification} $d^\ast$ of $d$:
$$
d^\ast(x,z)={\rm inf}\{\sum_{i=0}^{n-1}d(y_i,y_{i+1})\,|\, n\geq1, y_i\in X, y_0=x, y_n=z\}.
$$
Hence $\Met$ is a reflective subcategory of $\Dist$.
We have just explained that the reflector $F:\Dist\to\Met$ cannot preserve finite products. We will show that it does not even preserve finite powers.

\begin{exam}\label{power}
{
\em
Consider the distance spaces $A_0=\{x,y,z\}$ and $A_1=\{u,v,w\}$
where $d(x,y)=d(v,w)=2$, $d(y,z)=d(u,v)=1$ and $d(x,z)=d(u,w)=\infty$. Let $A$ be the coproduct $A_0+A_1$. Then the metric modification $FA$ only
changes $d(x,z)$ and $d(u,w)$ to $3$. Hence the distance of $[x,u]$ and 
$[z,w]$ in $FA\times FA$ is $3$.

In $A\times A$, we have $(d[x,u],[y,v])=d([y,v],[z,w])=2$, hence
$[d[x,u],[z,w])=4$ in $F(A\times A)$.
}
\end{exam}

The distance space $A_0$ above is the quotient in $\Dist$ of the coproduct $A_{00}+A_{01}+A_{02}$ of metric spaces $A_{00}=\{x,y\}$, $A_{01}=\{y',z\}$ and $A_{02}=\{x',z'\}$ where $d(x,y)=2$, $d(y',z)=1$ and $d(x',z')=\infty$ modulo the equivalence relation $x\sim x'$,
$y\sim y'$ and $z\sim z'$. Thus $FA_0$ is the quotient of $A_{00}+A_{01}+A_{02}$ modulo this equivalence relation in $\Met$.
Similarly $A_1$ is the quotient in $\Dist$ of the coproduct $A_{10}+A_{11}+A_{12}$ of metric spaces $A_{10}=\{u,v\}$, $A_{11}=\{v',w\}$ and $A_{12}=\{u',w'\}$ modulo the equivalence relation $u\sim u'$, $v\sim v'$ and $w\sim w'$. Hence $FA_1$ is the quotient
of $A_{10}+A_{11}+A_{12}$ modulo this equivalence relation in $\Met$.
Consequently $FA$ is the quotient of the equivalence relation $\sim$
on $B=A_{00}+A_{01}+A_{02}+A_{10}+A_{11}+A_{12}$ in $\Met$. Similarly,
$F(A\times A)$ is the quotient of $\sim\times\sim$ on $B\times B$.

Example \ref{power} thus shows that quotients of equivalence relations
do not commute with finite powers in $\Met$. Hence reflexive coequalizers
do not commute with finite powers in $\Met$. Consequently, \cite[Corollary 5.2, Corollary 5.3, Lemma 5.4 and Example 5.6]{R} are false.  

\vskip 2mm
{\bf Competing interests.} The author declare none.

\end{document}